\numberwithin{equation}{section}
\newtheorem {theorem}{Theorem}[section]
\newtheorem {proposition}[theorem]{Proposition}
\newtheorem {corollary}[theorem]{Corollary}
\theoremstyle{definition}
\theoremstyle{theorem}
\newtheorem {remark}[theorem]{Remark}
\newcommand{\Ent}{\operatorname{Ent}}
\newcommand{\dint}{\textup{d}}
\newcommand{\SO}{\textup{SO}}
\newcommand{\conv}{\textup{conv}}
\newcommand{\supp}{\textup{supp}}
\def\EE{\mathbb{E}}
\def\MM{\mathbb{M}}
\def\NN{\mathbb{N}}
\def\PP{\mathbb{P}}
\def\QQ{\mathbb{Q}}
\def\RR{\mathbb{R}}
\def\SS{\mathbb{S}}
\def\VV{\mathbb{V}}
\def\XX{\mathbb{X}}
\def\sfN{{\sf N}}
\def\cC{\mathcal{C}}
\def\cF{\mathcal{F}}
\def\cH{\mathcal{H}}
\def\cN{\mathcal{N}}
\def\cX{\mathcal{X}}
\def\sC{\mathscr{C}}
\def\sH{\mathscr{H}}
\begin{document}

\title{\bfseries Concentration on Poisson spaces\\ via modified $\Phi$-Sobolev inequalities}

\author{Anna Gusakova\footnotemark[1],\; Holger Sambale\footnotemark[2]\;\; and Christoph Th\"ale\footnotemark[3]}


\date{}
\renewcommand{\thefootnote}{\fnsymbol{footnote}}
\footnotetext[1]{Ruhr University Bochum, Germany. Email: anna.gusakova@rub.de}

\footnotetext[2]{Bielefeld University, Germany. Email: hsambale@math.uni-bielefeld.de}

\footnotetext[3]{Ruhr University Bochum, Germany. Email: christoph.thaele@rub.de}

\maketitle

\begin{abstract} Concentration properties of functionals of general Poisson processes are studied. Using a mo\-di\-fied $\Phi$-Sobolev inequality a recursion scheme for moments is established, which is of independent interest. This is applied to derive moment and concentration inequalities for functionals on abstract Poisson spaces. Applications of the general results in stochastic geometry, namely Poisson cylinder models and Poisson random polytopes, are presented as well. \\[2mm]
{\bf Keywords}. Concentration inequalities, $L^p$-estimates, modified $\Phi$-Sobolev inequalities, Poisson processes, stochastic geometry.\\
{\bf MSC}. 60D05, 60G55, 60H05.
\end{abstract}


\section{Introduction}

The stochastic analysis of non-linear functionals of Poisson processes (so-called Poisson functionals) on general state spaces was subject of intensive investigation during the last decade. New tools have been developed and new applications have been found. Most notably, we mention here the very fruitful connection between Malliavin calculus on Poisson spaces with Stein's method for normal approximation and the various striking applications in stochastic geometry of the resulting abstract limit theorems. For further background material on this topic and for references we refer to the monograph of Last and Penrose \cite{LP} as well as to the collection edited by Peccati and Reitzner \cite{PeccatiReitzner}. In contrast to the weak limit theorems just mentioned we are interested in concentration properties of Poisson functionals. In the past questions of this type for general Poisson functionals have been approached from three different angles. First, Wu \cite{Wu} developed a modified log-Sobolev inequality to derive a collection of concentration bounds under rather restrictive conditions. This approach was largely extended by Bachmann and Peccati \cite{BP}. In that paper and in \cite{BR} various applications of such bounds to random geometric graphs are discussed. Using general covariance identities for exponential functions of Poisson processes Gieringer and Last \cite{GieringerLast} were able to derive a new set of concentration inequalities. They were particularly useful to study concentration properties of geometric functionals associated with the Poisson Boolean model \cite{GieringerLast} or with so-called Poisson cylinder processes \cite{BaciBetkenGusakovaThaele} considered in stochastic geometry. In \cite{ReitznerConcentration} Reitzner proved an analogue of Talagrand's inequality for the convex distance on the Poisson space, which has found applications in \cite{ReitznerSchulteThaele} in the context of random geometric graphs. Finally, based on a general transportation inequality concentration bounds for so-called convex functionals of Poisson processes were proved by Gozlan, Herry and Peccati \cite{GHP}. They also gave applications to Poisson U-statistics.

It is the purpose of this paper to add another approach to concentration properties of Poisson functionals (see the beginning of Section \ref{sec:PhiSobolev} for a formal definition of this notion). It is based on a modified $\Phi$-Sobolev inequality for Poisson processes, which is due to Chafa\"{i} \cite{Chafai2004}. We formulate the result in Section \ref{sec:PhiSobolev} and demonstrate that these modified $\Phi$-Sobolev inequalities interpolate between two well-known results, namely the Poincar\'e inequality and Wu's modified log-Sobolev inequality on Poisson spaces. From the modified $\Phi$-Sobolev inequality we derive a recursive scheme of $L^p$-estimates for Poisson functionals by specializing the function $\Phi$. They involve two different types of difference operators, which reflect the discrete nature of the underlying problem, and are of independent interest. In Section \ref{sec:MomentConcentrationBounds} we use the recursive scheme to derive moment and concentration bounds for Poisson functionals. In particular, we recover qualitatively the bounds of Bachmann and Peccati by our method. The approach just described is not new and has been used by Boucheron, Bousquet, Lugosi and Massart \cite{BBLM} for functionals of independent random variables. This note demonstrates that a similar methodology can successfully be implemented in the framework of Poisson functionals as well. In the final Section \ref{sec:Appl} we discuss two novel applications of our general results to models from stochastic geometry, namely Poisson polytopes and Poisson cylinder processes. While concentration bounds for geometric functionals of Poisson cylinder models are known from \cite{BaciBetkenGusakovaThaele} and concentration inequalities for random polytopes can be found in \cite{Vu}, some of the estimates we prove are new. This is especially the case for the so-called intrinsic volumes of Poisson polytopes with vertices chosen from the boundary of a smooth convex body for which no concentration inequalities can be found in the existing literature.

We would like to emphasize that the general concentration inequalities we prove Section \ref{sec:MomentConcentrationBounds} of this paper are not new. Against this light, the main emphasis of our text is on providing an effective alternative approach based on moment estimates. In particular, the applications to the stochastic geometry models we develop in Section \ref{sec:Appl} could in principle also be derived from the general inequalities stated in \cite{BP}.

\medskip

Independently and, in a sense, in parallel with us Adamczak, Polaczyk and Strzelecki \cite{AdamczakEtAl} have very recently obtained (among many other results) moment estimates and concentration bounds for Poisson functionals, which are similar to the ones we prove, see especially Section 4.7 in \cite{AdamczakEtAl}. The main difference to their paper is that they use a so-called Beckner-type inequality as their starting point (a device we will be able to recover from our results as well, see Corollary \ref{cor:Beckner}), while we are building on a modified $\Phi$-Sobolev inequality. Apart from this, both approaches eventually rely on the methods of Boucheron, Bousquet, Lugosi and Massart \cite{BBLM} and lead to very similar results. We will further comment on the differences within the text.

\section{Modified $\Phi$-Sobolev inequalities for Poisson functionals}\label{sec:PhiSobolev}

Let $(\XX,\cX)$ be a measurable space supplied with a $\sigma$-finite measure $\mu$. By $\sfN(\XX)$ we denote the space of $\sigma$-finite counting measures on $\XX$. The $\sigma$-field $\cN(\XX)$ is defined as the smallest $\sigma$-field on $\sfN(\XX)$ such that the evaluation mappings $\xi\mapsto\xi(B)$, $B\in\cX$, $\xi\in\sfN(\XX)$ are measurable. A \textbf{point process} on $\XX$ is a measurable mapping with values in $\sfN(\XX)$ defined over some fixed probability space $(\Omega,\cF,\PP)$. By a \textbf{Poisson process} $\eta$ on $\XX$ with intensity measure $\mu$ we understand a point process with the following two properties:
\begin{itemize}
    \item[(i)] for any $B\in\cX$ the random variable $\eta(B)$ is Poisson distributed with mean $\mu(B)$;
    \item[(ii)] for any $n\in\NN$ and pairwise disjoint sets $B_1,\ldots,B_n\in\cX$ the random variables $\eta(B_1),\ldots,\eta(B_n)$ are independent.
\end{itemize}
A \textbf{Poisson functional} is a random variable $F$ $\PP$-almost surely satisfying $F=f(\eta)$ for some measurable $f:\sfN(\XX)\to\RR$. In this case $f$ is called a \textbf{representative} of $F$. If $\PP_\eta$ denotes the distribution of the Poisson process $\eta$ we will write $L^p(\PP_\eta)$, $p\geq 0$, for the space of Poisson functionals $F$ satisfying $\EE|F|^p<\infty$. For a Poisson functional $F$ with representative $f$ and $x\in\XX$ we define the \textbf{difference operator} $D_xF$ by putting
$$
D_xF:=f(\eta+\delta_x)-f(\eta),
$$
where $\delta_x$ stands for the Dirac measure at $x$. For further background on Poisson processes on general state spaces we refer the reader to the monograph \cite{LP}.

\medskip

We denote by $\sC$ the space of functions $\Phi:\RR_+\to\RR$ satisfying the following three properties:
\begin{itemize}
    \item[(i)] $\Phi$ is convex and continuous,
    \item[(ii)] $\Phi$ is twice differentiable on $(0,\infty)$,
    \item[(iii)] $\Phi$ is either affine, or $\Phi''$ is strictly positive and $1/\Phi''$ is concave.
\end{itemize}
Typical examples of functions belonging to $\sC$ are $\Phi_{\log}(x)=x\log x$ or $\Phi_r(x)=x^{2/r}$ with $r\in(1,2)$ and $x\in\RR_+$. For $\Phi\in\sC$ the \textbf{$\Phi$-entropy} of a random variable $F$ is defined as
$$
\Ent_\Phi(F) := \EE[\Phi(F)] - \Phi(\EE[F]).
$$
In particular, the classical entropy $\Ent(F)=\EE[F\log F]-\EE[F]\log(\EE[F])$ of $F$ is recovered by taking $\Phi=\Phi_{\log}$. 

The following modified $\Phi$-Sobolev inequality is due to Chafa\"{i} \cite[Section 5.1]{Chafai2004} and generalizes the modified log-Sobolev inequality of Wu \cite{Wu}. 

\begin{proposition}[Modified $\Phi$-Sobolev inequality]\label{prop:PhiSobolevIneq}
Let $\eta$ be a Poisson process on a measurable space $\XX$ with $\sigma$-finite intensity measure $\mu$. Let $F\in L^1(\PP_\eta)$, $\Phi\in\sC$ and assume that $F>0$ $\PP$-almost surely. Then
$$
\Ent_\Phi(F) \leq \EE\Big[\int_\XX \big(D_x\Phi(F)-\Phi'(F)D_xF\big)\,\mu(\dint x)\Big].
$$
\end{proposition}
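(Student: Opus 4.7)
The plan is to reduce the statement to a one-dimensional Poisson $\Phi$-Sobolev inequality by tensorization, combined with a finite-dimensional approximation of $\eta$. I would first, by a standard monotone convergence and truncation argument, assume that $F$ is bounded and bounded away from zero, and that it depends only on the restriction of $\eta$ to a set $\XX_0 \subset \XX$ with $\mu(\XX_0) < \infty$. Next I would partition $\XX_0$ into small measurable pieces $B_1^{(n)}, \dots, B_{k_n}^{(n)}$ with $\max_i \mu(B_i^{(n)}) \to 0$ and replace $F$ by a step approximation $F_n = f_n(N_1, \ldots, N_{k_n})$ depending only on the independent Poisson counts $N_i := \eta(B_i^{(n)}) \sim \mathrm{Poi}(\mu(B_i^{(n)}))$.

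The first main ingredient is the tensorization (subadditivity) of $\Phi$-entropy: for independent $N_1, \ldots, N_{k_n}$ and $\Phi \in \sC$,
$$\Ent_\Phi(F_n) \,\leq\, \sum_{i=1}^{k_n} \EE\big[\Ent_\Phi^{(i)}(F_n)\big],$$
where $\Ent_\Phi^{(i)}$ denotes the conditional $\Phi$-entropy in the $i$-th variable with the others fixed. This is classical for $\Phi = \Phi_{\log}$ and extends to the whole class $\sC$ via the variational representation of $\Phi$-entropy; the condition that $1/\Phi''$ is concave is precisely what makes this subadditivity go through.

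The second main ingredient is a marginal inequality for $N \sim \mathrm{Poi}(\lambda)$: for any positive $g : \NN \cup \{0\} \to (0,\infty)$,
$$\Ent_\Phi(g(N)) \,\leq\, \lambda \, \EE\big[\Phi(g(N+1)) - \Phi(g(N)) - \Phi'(g(N))(g(N+1) - g(N))\big].$$
To prove this I would first establish the corresponding two-point Bernoulli inequality on $\{0,1\}$ by a direct calculus argument that uses convexity of $\Phi$ together with concavity of $1/\Phi''$, then lift it to $\mathrm{Bin}(n, \lambda/n)$ via tensorization, and finally pass to the Poisson limit as $n \to \infty$.

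Combining the two ingredients, applied conditionally in each coordinate of $F_n$, yields
$$\Ent_\Phi(F_n) \,\leq\, \sum_{i=1}^{k_n} \mu(B_i^{(n)}) \, \EE\big[\Phi(f_n(N{+}e_i)) - \Phi(f_n(N)) - \Phi'(f_n(N))(f_n(N{+}e_i) - f_n(N))\big],$$
where $e_i$ is the $i$-th standard basis vector and $N = (N_1, \ldots, N_{k_n})$. Since inserting an additional point anywhere in $B_i^{(n)}$ increases $N_i$ by one, the right-hand side is a Riemann sum for $\EE\int_{\XX_0}(D_x\Phi(F) - \Phi'(F) D_x F)\, \mu(\dint x)$; passing to the limit $n \to \infty$ and removing the truncation and positivity cutoffs then gives the claim, the left-hand side converging to $\Ent_\Phi(F)$ by continuity of $\Phi$. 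The principal obstacle is the two-point Bernoulli step: getting the inequality in the correct form with the sharp linear factor $\lambda$ uses the full strength of $1/\Phi''$ being concave, not merely convexity of $\Phi$, and the uniform control needed both in the Poisson limit and in the partition refinement $n \to \infty$ is delicate. This is precisely why the class $\sC$ is defined as it is: the same structural condition on $\Phi$ that drives the marginal inequality also guarantees the tensorization step.
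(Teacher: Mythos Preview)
The paper does not give its own proof of this proposition; it simply attributes the result to Chafa\"{i} \cite[Section 5.1]{Chafai2004}. Your sketch is a correct outline of one of the standard routes to such inequalities on Poisson space, and it is in fact close in spirit to the methodology developed in Chafa\"{i}'s paper: tensorization of $\Phi$-entropy for product measures (which holds precisely for $\Phi\in\sC$, this being one of the main structural results of \cite{Chafai2004}), combined with the one-dimensional Poisson inequality obtained as a limit of Bernoulli tensorizations.

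Two small comments on the execution. First, your ``Riemann sum'' language is informal in the abstract measurable-space setting, where there is no smallness of cells in any metric sense. The clean way to run the argument is to prove the inequality first for cylinder functionals $F=f(\eta(B_1),\ldots,\eta(B_k))$, for which $D_xF$ is constant on each $B_i$ and the $\mu$-integral is \emph{exactly} the finite sum you wrote down; one then extends to general $F\in L^1(\PP_\eta)$ by a monotone-class/density argument rather than by refining a partition. Second, the two-point Bernoulli step you single out as the principal obstacle is indeed the crux, and Chafa\"{i} handles it via the Bregman-divergence structure of the integrand $\Phi(a)-\Phi(b)-\Phi'(b)(a-b)$ together with the concavity of $1/\Phi''$; this is where the definition of $\sC$ is used in full, exactly as you note.
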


\begin{remark}\rm 
If the derivative $\Phi'(0)$ of $\Phi$ at $0$ is well defined, we can relax the positivity assumption on $F$ in the previous proposition by requiring only that $F\geq 0$ $\PP$-almost surely. We will take advantage of this fact from now on whenever we are dealing with $\Phi_r$.
\end{remark}

\begin{corollary}\label{cor:PhiRSovolev}
Fix $r\in(1,2)$ and let $F\in L^1(\PP_\eta)$ be a Poisson functional satisfying $F\geq 0$ $\PP$-almost surely. Then
$$
\Ent_r(F):=\Ent_{\Phi_r}(F) \leq \EE\Big[\int_\XX\big(D_xF^{2\over r}-{2\over r}F^{{2\over r}-1}\,(D_xF)\big)\,\mu(\dint x)\Big].
$$
\end{corollary}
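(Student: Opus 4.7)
The plan is to obtain the corollary as a direct specialisation of Proposition \ref{prop:PhiSobolevIneq} to the choice $\Phi = \Phi_r$, so the real work lies in checking that $\Phi_r$ belongs to the class $\sC$ and that the positivity hypothesis may indeed be relaxed to $F\geq 0$.

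First I would verify the three conditions defining $\sC$ for $\Phi_r(x)=x^{2/r}$ with $r\in(1,2)$. Since $2/r\in(1,2)$, the map $x\mapsto x^{2/r}$ is continuous on $\RR_+$ and convex (as $2/r>1$), which gives (i). It is twice differentiable on $(0,\infty)$ with
$$
\Phi_r'(x) = \tfrac{2}{r}\,x^{{2\over r}-1}, \qquad \Phi_r''(x) = \tfrac{2}{r}\bigl(\tfrac{2}{r}-1\bigr)\,x^{{2\over r}-2},
$$
so (ii) holds and $\Phi_r''>0$ on $(0,\infty)$. Finally,
$$
\frac{1}{\Phi_r''(x)} = \frac{r^2}{2(2-r)}\,x^{2-{2\over r}},
$$
and since $2-2/r\in(0,1)$, the function $x\mapsto x^{2-2/r}$ is concave on $\RR_+$, which yields (iii). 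Hence $\Phi_r\in\sC$.

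Next I would note that $2/r-1>0$, so $\Phi_r'(x)=(2/r)x^{2/r-1}$ extends continuously to $x=0$ with $\Phi_r'(0)=0$. By the remark following Proposition \ref{prop:PhiSobolevIneq}, this allows us to apply the $\Phi$-Sobolev inequality under the weaker assumption $F\geq 0$ instead of $F>0$ almost surely.

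Finally I would substitute $\Phi=\Phi_r$ into Proposition \ref{prop:PhiSobolevIneq}: the left-hand side is by definition $\Ent_r(F)=\Ent_{\Phi_r}(F)$, while on the right-hand side we have $D_x\Phi_r(F)=D_xF^{2/r}$ and $\Phi_r'(F)D_xF=(2/r)F^{2/r-1}D_xF$, which directly produces the stated inequality. There is no genuine obstacle beyond the verification of membership in $\sC$; in particular, no integration-by-parts or approximation argument is required here since everything is already packaged in Proposition \ref{prop:PhiSobolevIneq}.
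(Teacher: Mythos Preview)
Your proposal is correct and follows exactly the paper's approach: the paper's own proof is a one-liner stating that the corollary is a direct consequence of Proposition~\ref{prop:PhiSobolevIneq} with $\Phi=\Phi_r$ and $\Phi'(x)=\tfrac{2}{r}x^{2/r-1}$. You have simply spelled out the verification that $\Phi_r\in\sC$ and the invocation of the remark permitting $F\geq 0$, which the paper leaves implicit.
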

\begin{proof}
This is a direct consequence of Proposition \ref{prop:PhiSobolevIneq} with $\Phi=\Phi_r$, since $\Phi'(x)={2\over r}x^{{2\over r}-1}$.
\end{proof}

Note that if we let $r\to 1$, then $\Ent_r(F)\to\VV(F)$, the variance of $F$. Using  that for two Poisson functionals $F,G$ and for all $x\in\XX$ one has the product rule
$$
D_x(FG) = (D_xF)G+F(D_xG) + (D_xF)(D_xG)
$$
(compare with \cite[Exercise 18.2]{LP}), it follows that, for $r=1$,
\begin{align*}
    D_xF^{2\over r}-{2\over r}F^{{2\over r}-1}\,(D_xF) =  D_xF^2 -2 F\,D_xF = 2F(D_xF)+(D_xF)^2-2F(D_xF) = (D_xF)^2.
\end{align*}
Thus, as $r\to 1$, the modified $\Phi_r$-Sobolev inequality turns into the $L^1$-version of the \textbf{Poincar\'e inequality} for Poisson functionals:
\begin{equation}\label{eq:PoincareIneq}
\VV(F) \leq \EE\Big[\int_\XX(D_xF)^2\,\mu(\dint x)\Big],    
\end{equation}
see \cite[Proposition 2.5]{LastPeccatiSchulte} and \cite[Theorem 18.7]{LP}. On the other hand, as $r\to 2$, we have that, for $x\in\RR_+$,
\begin{align}
    x^{2\over r} &= x-\Big({r\over 2}-1\Big)x\log x + O((r-2)^2),\label{eq:xHoch2/r}\\
    \nonumber -{2\over r}x^{{2\over r}-1} &= -1 + \Big({r\over 2}-1\Big)(\log x+1) + O((r-2)^2),
\end{align}
which implies that $\PP$-almost surely
\begin{align*}
    D_xF^{2\over r}-{2\over r}F^{{2\over r}-1}\,(D_xF)
    &= D_x\Big(F-\Big({r\over 2}-1\Big)F\log F\Big) - D_xF + \Big({r\over 2}-1\Big)(D_xF)(\log F+1)+O((r-2)^2)\\
    &= \Big(1-{r\over 2}\Big)\Big(D_x(F\log F)-(\log F+1)(D_xF)+ O(r-2)\Big) .
\end{align*}
As a consequence, we conclude that on the one hand side, as $r\to 2$,
\begin{align*}
\Big(1-{r\over 2}\Big)^{-1}\EE\Big[\int_\XX\big(D_xF^{2\over r}-{2\over r}F^{{2\over r}-1}\,(D_xF)\big)\,\mu(\dint x)\Big]
\to \EE\Big[\int_\XX\big(D_x(F\log F)-(\log F+1)(D_xF)\big)\,\mu(\dint x)\Big].    
\end{align*}
On the other hand, using \eqref{eq:xHoch2/r} again we see that, as $r\to 2$,
\begin{align*}
    \Big(1-{r\over 2}\Big)^{-1}\Ent_r(F) = \Big(1-{r\over 2}\Big)^{-1}\Big(\EE[F^{2\over r}]-(\EE[F])^{2\over r}\Big)
    = \EE[F\log F]-\EE[F]\log(\EE[F]) + O(r-2),
\end{align*}
which implies 
$$
\Big(1-{r\over 2}\Big)^{-1}\Ent_r(F) \to \Ent(F),
$$
as $r\to 2$, where $\Ent(F)=\Ent_{\Phi_{\log}}$ is the classical entropy of $F$.
Summarizing, we conclude that, as $r\to 2$, the modified $\Phi_r$-Sobolev inequality turns into Wu's {\bf modified log-Sobolev inequality}
\begin{equation}\label{eq:LogSobolve}
\begin{split}
\Ent(F) &\leq \EE\Big[\int_\XX \big(D_x(F\log F)-(\log F+1)(D_xF)\big)\,\mu(\dint x)\Big]\\
&=\EE\Big[\int_\XX \big(D_x\Phi_{\log}(F)-\Phi_{\log}'(F)(D_xF)\big)\,\mu(\dint x)\Big],
\end{split}
\end{equation}
see \cite[Theorem 1.1]{Wu}. Against this light one can say that family of modified $\Phi_r$-Sobolev inequalities for Poisson functionals from Corollary \ref{cor:PhiRSovolev} interpolates between two classical inequalities for Poisson functionals, namely the Poincar\'e inequality ($r\to 1$, see \eqref{eq:PoincareIneq}) and the modified log-Sobolev inequality ($r\to 2$, see \eqref{eq:LogSobolve}) for Poisson functionals.

\medskip

Note that the modified $\Phi_r$-Sobolev inequality established in Corollary \ref{cor:PhiRSovolev} implies the {\bf Beckner-type inequality} Bec-$(2/r)$ used in \cite[Section 4.7]{AdamczakEtAl}.

\begin{corollary}[Beckner-type inequality]\label{cor:Beckner}
Let $F\in L^1(\PP_\eta)$ satisfy $F\geq 0$ $\PP$-almost surely and fix $r\in(1,2)$. Then $F$ satisfies the Beckner-type inequality Bec-$(2/r)$, meaning that
\[
\Ent_r(F)\leq {6\over r} \EE\Big[\int_\XX(D_xF)\,(D_xF^{{2\over r}-1})\,\mu(\dint x)\Big].
\]
\end{corollary}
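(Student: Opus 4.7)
The plan is to derive the Beckner-type inequality directly from the modified $\Phi_r$-Sobolev inequality of Corollary~\ref{cor:PhiRSovolev} via a pointwise comparison of integrands. First, I would fix $x\in\XX$, set $\alpha := 2/r \in (1,2)$, $\phi(t) := t^{\alpha}$, $a := F$ and $b := F(\eta+\delta_x) = F+D_xF$, both non-negative $\PP$-almost surely by hypothesis. The integrand on the right-hand side of Corollary~\ref{cor:PhiRSovolev} is then precisely the Bregman expression $\phi(b) - \phi(a) - \phi'(a)(b-a)$, whereas $\tfrac{6}{r} = 3\alpha$ times the integrand of the target bound equals $3(b-a)(\phi'(b) - \phi'(a))$, since $\phi'(x) = \alpha x^{\alpha-1}$. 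Hence the whole claim reduces to the deterministic inequality
\[
\phi(b) - \phi(a) - \phi'(a)(b-a) \;\leq\; 3(b-a)\bigl(\phi'(b) - \phi'(a)\bigr), \qquad a,b \geq 0,
\]
which I would then integrate against $\mu$ and take expectations to conclude. The passage from $F>0$ to $F\geq 0$ needed to apply Proposition~\ref{prop:PhiSobolevIneq} is granted by the remark following that proposition, since $\Phi_r'(0)=0$ for $r<2$.

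For the pointwise inequality, the cleanest route I see is the symmetric Bregman identity
\[
\bigl[\phi(b) - \phi(a) - \phi'(a)(b-a)\bigr] + \bigl[\phi(a) - \phi(b) - \phi'(b)(a-b)\bigr] = (b-a)\bigl(\phi'(b) - \phi'(a)\bigr),
\]
which follows by direct cancellation. Since $\phi$ is convex on $\RR_+$ (as $\alpha>1$), both bracketed terms on the left are non-negative, so each of them is bounded above by the right-hand side. This already yields the pointwise estimate with the sharper constant $1$ in place of $3$, so the factor $3\alpha = 6/r$ claimed by the corollary is comfortably attained. (One might wonder why the factor $3$ is kept; it is merely to align the statement with the form in which this inequality is used later.)

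The argument is essentially free of obstacles: the entire content is a one-line convexity identity combined with the $\Phi_r$-Sobolev inequality. The only point of care is the boundary behaviour at $a=0$ or $b=0$, which is handled by continuity since $\alpha>1$ ensures that $\phi$ and $\phi'$ extend continuously to $\RR_+$ with $\phi(0) = \phi'(0) = 0$; at $a=0$ the pointwise inequality then collapses to $b^\alpha \leq 3\alpha\, b^\alpha$, which clearly holds.
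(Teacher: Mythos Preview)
Your argument is correct and follows the same overall strategy as the paper: both proofs reduce the claim, via Corollary~\ref{cor:PhiRSovolev}, to a pointwise inequality between the Bregman integrand $\phi(b)-\phi(a)-\phi'(a)(b-a)$ and the symmetric quantity $(b-a)(\phi'(b)-\phi'(a))$, where $\phi(t)=t^{2/r}$. The only difference lies in how that pointwise estimate is established. The paper fixes $b$, rewrites the inequality as $\psi(a)\ge 0$ for an explicit polynomial-type function $\psi$, and checks via calculus that $\psi$ has a unique critical point at $a=b$ with $\psi(b)=0$. Your Bregman identity
\[
D_\phi(b\Vert a)+D_\phi(a\Vert b)=(b-a)\bigl(\phi'(b)-\phi'(a)\bigr)
\]
together with non-negativity of both Bregman terms (convexity of $t\mapsto t^{2/r}$ for $r\in(1,2)$) is a cleaner and more conceptual route, and it actually gives the sharper constant $1$ in place of $3$, i.e.\ $2/r$ instead of $6/r$ in the final bound. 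Your handling of the boundary cases $a=0$ or $b=0$ and of the relaxation from $F>0$ to $F\ge 0$ is also in order.
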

\begin{proof}
This immediately follows from Corollary \ref{cor:PhiRSovolev} once we have checked that
\[
\EE\Big[\int_\XX\big(D_xF^{2\over r}-{2\over r}F^{{2\over r}-1}\,(D_xF)\big)\,\mu(\dint x)\Big] \le {6\over r} \EE\Big[\int_\XX(D_xF)\,(D_xF^{{2\over r}-1})\,\mu(\dint x)\Big].
\]
Setting $p := 2/r$, the latter follows from the pointwise inequality
\[
a^p - b^p - p b^{p-1}(a-b) \le 3p (a^p + b^p - ab^{p-1} - ba^{p-1})
\]
for any $a, b \ge 0$. Indeed, fixing $b \ge 0$ and rearranging, this inequality is equivalent to $\psi(a) \ge 0$ for $a \ge 0$, where
\[
\psi(a) := (3p-1)a^p + (2p+1)b^p - 2p ab^{p-1} - 3p ba^{p-1}.
\]
Now, for $a \ge 0$, one has that
\[
\psi'(a) = (3p-1)pa^{p-1} - 2p b^{p-1} - 3p(p-1) ba^{p-2},
\]
which has a unique zero at $a = b$ with $\psi(b) = 0$. From this fact the claim follows.
\end{proof}

Finally, let us stress that the various functional inequalities discussed in this section are closely related (but not completely equivalent) concepts. For instance, it follows from \cite[Theorem 1.1]{AdamczakEtAl} that the family of Beckner-type inequalities from the previous corollary is equivalent to the modified logarithmic Sobolev inequality
\[
\Ent(F) \le \int_\XX \EE[(D_xF)(D_x\log F)]\,\mu(\dint x).
\]
As demonstrated in Section 4.7 in \cite{AdamczakEtAl}, the latter is an easy consequence of Wu's modified log-Sobolev inequality.

\section{$L^p$-estimates for Poisson functionals}\label{sec:LpEstimates}

This section is devoted to $L^p$-estimates for functionals $F$ of Poisson processes on a measurable space $\XX$. For this we use the modified $\Phi$-Sobolev inequality for Poisson functionals developed in the previous section. As above we assume that $\eta$ is a Poisson process on $\XX$ and denote by $\mu$ its $\sigma$-finite intensity measure. For $x\in\RR$ we shall write $x_+:=\max\{0,x\}$ and $x_-:=\min\{0,x\}$ and for $p>0$ and a random variable $F$ we put $\|F\|_p:=(\EE[|F|^p])^{1/p}$.

\begin{proposition}[Recursive $L^p$-estimate]\label{prop:LpEstimate}
Let $p\geq 2$ and $F\in L^{1}(\PP_\eta)$ be a Poisson functional. Then
\begin{equation}\label{Lpest}
\lVert (F - \EE[F])_+ \rVert_p^p \le \lVert (F - \EE[F])_+\rVert_{p-1}^p + (p-1) \lVert V^+ \rVert_{p/2} \lVert (F - \EE[F])_+ \rVert_p^{p-2},
\end{equation}
where
\[
V^+ := \int_\XX (D_x F)_-^2 \,\mu (\dint x) + \int_\XX (F(\eta) - F(\eta - \delta_x))_+^2 \,\eta(\dint x).
\]
\end{proposition}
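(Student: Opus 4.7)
Write $G := (F - \EE[F])_+$ and, for $x \in \XX$, $G' := G(\eta + \delta_x)$. The plan is to apply the modified $\Phi_r$-Sobolev inequality of Corollary \ref{cor:PhiRSovolev} to the non-negative Poisson functional $G^{p-1}$, with the tailored choice $r := 2(p-1)/p$. This value lies in $(1,2)$ for $p>2$ (the boundary case $p = 2$ is just the Poincar\'e inequality \eqref{eq:PoincareIneq}), and is engineered so that $2/r = p/(p-1)$ and $(G^{p-1})^{2/r-1} = G$. Consequently, the $\Phi_r$-entropy collapses to exactly the desired left-hand side,
$$\Ent_r(G^{p-1}) = \EE[G^p] - (\EE[G^{p-1}])^{p/(p-1)} = \|G\|_p^p - \|G\|_{p-1}^p,$$
while Corollary \ref{cor:PhiRSovolev} becomes
$$\|G\|_p^p - \|G\|_{p-1}^p \le \tfrac{1}{p-1}\,\EE\int_\XX\big[(p-1)(G')^p + G^p - p G (G')^{p-1}\big]\,\mu(\dint x).$$

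The bracketed integrand is the second-order convexity defect $\phi(G) - \phi(G') - \phi'(G')(G-G')$ for $\phi(x) = x^p$, which Taylor's theorem writes as $\tfrac{p(p-1)}{2}\xi^{p-2}(G-G')^2$ for some $\xi$ between $G$ and $G'$. Splitting according to the sign of $D_xG = G' - G$ and bounding $\xi^{p-2}$ above by $G^{p-2}$ when $G' \le G$ and by $(G')^{p-2}$ when $G' \ge G$, I obtain the pointwise estimate
$$\tfrac{1}{p-1}\big[(p-1)(G')^p + G^p - p G (G')^{p-1}\big] \le \tfrac{p}{2}\big[G^{p-2}(D_xG)_-^2 + (G')^{p-2}(D_xG)_+^2\big].$$

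Next I would use two Lipschitz-type observations about $y \mapsto (y - \EE[F])_+$: namely $(D_xG)_\pm^2 \le (D_xF)_\pm^2$ pointwise, and, symmetrically, $(G(\eta) - G(\eta - \delta_x))_+ \le (F(\eta) - F(\eta - \delta_x))_+$. The first bound converts the $G^{p-2}(D_xG)_-^2$ summand into $G^{p-2}(D_xF)_-^2$, which already matches the $\mu$-part of $V^+$. For the $(G')^{p-2}(D_xG)_+^2$ summand, which is still a forward-difference integral against $\mu$, I would apply Mecke's formula $\EE\int h(\eta + \delta_x, x)\,\mu(\dint x) = \EE\int h(\eta, x)\,\eta(\dint x)$ with $h(\xi,x) := G(\xi)^{p-2}(F(\xi) - F(\xi - \delta_x))_+^2$, turning it into an $\eta$-integral that matches the second half of $V^+$. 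Combining the two halves gives $\|G\|_p^p - \|G\|_{p-1}^p \le \tfrac{p}{2}\,\EE[G^{p-2}V^+]$, after which H\"older's inequality with conjugate exponents $p/2$ and $p/(p-2)$ factors the right-hand side as $\|V^+\|_{p/2}\|G\|_p^{p-2}$. Since $p/2 \le p-1$ for $p \ge 2$, the stated inequality \eqref{Lpest} follows.

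I expect the main obstacle to lie in reconciling the structural asymmetry between the Sobolev inequality (which produces only forward differences $D_x$ integrated against $\mu$) and the definition of $V^+$ (which mixes a $\mu$-integral of $(D_xF)_-^2$ with an $\eta$-integral of a backward difference). Threading this needle requires that the sign-based splitting of $D_xG$ be compatible with a Mecke conversion applied solely in the positive-part direction; getting the pointwise Taylor bound to cooperate with this asymmetry — so that the negative-part term stays put while the positive-part term migrates onto $\eta$ — is the combinatorial heart of the argument.
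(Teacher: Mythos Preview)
Your proposal is correct and follows essentially the same route as the paper: apply Corollary~\ref{cor:PhiRSovolev} with $r=2-2/p$ to $G^{p-1}$, bound the integrand pointwise by splitting on the sign of $D_xG$, convert the positive-part term via the Mecke formula, and finish with H\"older. The one genuine difference is in how the integrand is estimated: the paper rewrites $I(\eta,x)$ algebraically and applies the mean value theorem twice (once to pass from $I$ to $(G'-G)(G'^{\,p-1}-G^{p-1})$, and once more to bound the latter), arriving at $I(\eta,x)\le (p-1)c^{p-2}(G'-G)^2$; you instead recognise the integrand as the exact second-order Taylor remainder $\phi(G)-\phi(G')-\phi'(G')(G-G')$ for $\phi(x)=x^p$ and obtain $I(\eta,x)=\tfrac{p}{2}\,\xi^{p-2}(G'-G)^2$ directly. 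Your route is cleaner and yields the sharper prefactor $p/2\le p-1$, so \eqref{Lpest} follows a fortiori.
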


\begin{proof}
If $(F-\EE[F])_+\notin L^{p-1}(\PP_\eta)$ there is nothing to prove. Hence, from now on we assume that $(F-\EE[F])_+\in L^{p-1}(\PP_\eta)$. So, given the functional $F$ with representative $f$ consider another Poisson functional $G$ with representative $g$ defined as 
\[
g(\eta):=(f(\eta)-\EE[F])_+^{p-1}.
\]
It is clear that $G\geq 0$ $\PP$-almost surely. Thus, applying Corollary \ref{cor:PhiRSovolev} to the Poisson functional $G$ and with $r=2-{2\over p}$ we obtain 
\begin{align}
\Ent_{2-2/p}(G)&=\EE[(F-\EE[F])_+^{p}]-(\EE[(F-\EE[F])_+^{p-1}])^{p\over p-1}\notag\\
&=\|(F-\EE[F])_+\|^p_p-\|(F-\EE[F])_+\|^{p}_{p-1}\notag\\
&\leq \EE\Big[\int_\XX \big(D_x(F-\EE[F])_+^{p}-{p\over p-1}(F-\EE[F])_+D_x(F-\EE[F])_+^{p-1}\big)\,\mu(\dint x)\Big].\label{eq:Lp1}
\end{align}
Using the definition of difference operator $D_x$ we get
\begin{align}
I(\eta,x):&=D_x(F-\EE[F])_+^{p}-{p\over p-1}(F-\EE[F])_+D_x(F-\EE[F])_+^{p-1}\notag\\
&=(f(\eta+\delta_x)-\EE[F])_+^{p}-(f(\eta)-\EE[F])_+^{p}-{p\over p-1}(f(\eta)-\EE[F])_+(f(\eta+\delta_x)-\EE[F])_+^{p-1}\notag\\
&\qquad +{p\over p-1}(f(\eta)-\EE[F])_+^{p}\notag\\
&={1\over p-1}(f(\eta)-\EE[F])_+\Big[(f(\eta)-\EE[F])_+^{p-1}-(f(\eta+\delta_x)-\EE[F])_+^{p-1}\Big]\notag\\
&\qquad+(f(\eta+\delta_x)-\EE[F])_+^{p-1}\Big[(f(\eta+\delta_x)-\EE[F])_+-(f(\eta)-\EE[F])_+\Big].\label{eq:Lp2}
\end{align}
By the mean value theorem for the function $x^{p-1}$ on the interval $J:=\Big[\min((f(\eta)-\EE[F])_+, (f(\eta+\delta_x)-\EE[F])_+), \max((f(\eta)-\EE[F])_+, (f(\eta+\delta_x)-\EE[F])_+)\Big]$ we have
\begin{align*}
{1\over p-1}\Big[(f(\eta)-\EE[F])_+^{p-1}-(f(\eta+\delta_x)-\EE[F])_+^{p-1}\Big]
=-\Big[(f(\eta+\delta_x)-\EE[F])_+-(f(\eta)-\EE[F])_+\Big]c^{p-2}
\end{align*}
for some $c\in J$. Now consider two cases. If $(f(\eta+\delta_x)-\EE[F])_+-(f(\eta)-\EE[F])_+\ge 0$, then $(f(\eta+\delta_x)-\EE[F])_+\ge (f(\eta)-\EE[F])_+$ and we obtain
\begin{align*}
&{1\over p-1}\Big[(f(\eta)-\EE[F])_+^{p-1}-(f(\eta+\delta_x)-\EE[F])_+^{p-1}\Big]\\
&\qquad\leq-\Big[(f(\eta+\delta_x)-\EE[F])_+-(f(\eta)-\EE[F])_+\Big](f(\eta)-\EE[F])_+^{p-2}.
\end{align*}
On the other hand, if $(f(\eta+\delta_x)-\EE[F])_+-(f(\eta)-\EE[F])_+\leq 0$, then $(f(\eta+\delta_x)-\EE[F])_+\leq (f(\eta)-\EE[F])_+$ and we again obtain
\begin{align*}
&{1\over p-1}\Big[(f(\eta)-\EE[F])_+^{p-1}-(f(\eta+\delta_x)-\EE[F])_+^{p-1}\Big]\\
&\qquad\leq-\Big[(f(\eta+\delta_x)-\EE[F])_+-(f(\eta)-\EE[F])_+\Big](f(\eta)-\EE[F])_+^{p-2}.
\end{align*}
Substituting this into \eqref{eq:Lp2} and using the mean value theorem one more time we arrive at
\begin{align*}
I(\eta,x)&\leq \Big((f(\eta+\delta_x)-\EE[F])_+-(f(\eta)-\EE[F])_+\Big) \Big((f(\eta+\delta_x)-\EE[F])_+^{p-1}-(f(\eta)-\EE[F])_+^{p-1}\Big)\\
&\leq (p-1)\Big[(f(\eta+\delta_x)-\EE[F])_+-(f(\eta)-\EE[F])_+\Big]^2 c^{p-2}, 
\end{align*}
where $c\in J$. Let us estimate the difference $ \Big[(f(\eta+\delta_x)-\EE[F])_+-(f(\eta)-\EE[F])_+\Big]^2$. We again consider a number of cases.
\begin{enumerate}
    \item If $f(\eta+\delta_x), f(\eta)\ge \EE[F]$, then 
    \[
    \Big[(f(\eta+\delta_x)-\EE[F])_+-(f(\eta)-\EE[F])_+\Big]^2=(D_x F)^2.
    \]
    \item If $f(\eta+\delta_x)\ge \EE[F] \ge f(\eta)$, then 
    \[
    \Big[(f(\eta+\delta_x)-\EE[F])_+-(f(\eta)-\EE[F])_+\Big]^2=\Big[f(\eta+\delta_x)-\EE[F]\Big]^2\leq (D_x F)^2.
    \]
    \item If $f(\eta)\ge \EE[F] \ge f(\eta+\delta_x)$, then 
    \[
    \Big[(f(\eta+\delta_x)-\EE[F])_+-(f(\eta)-\EE[F])_+\Big]^2=\Big[f(\eta)-\EE[F]\Big]^2\leq (D_x F)^2.
    \]
    \item If $\EE[F] \ge f(\eta+\delta_x), f(\eta)$, then 
    \[
    \Big[(f(\eta+\delta_x)-\EE[F])_+-(f(\eta)-\EE[F])_+\Big]^2=0\leq (D_x F)^2.
    \]
\end{enumerate}
Thus
\begin{align*}
I(\eta,x)&\leq (p-1)(D_x F)^2 c^{p-2}\\
&\leq (p-1)(D_x F)^2_{+}(f(\eta+\delta_x)-\EE[F])_+^{p-2}+(p-1)(D_x F)^2_{-}(f(\eta)-\EE[F])_+^{p-2}, 
\end{align*}
and, by plugging this into \eqref{eq:Lp1} and applying Fubini's theorem, we obtain
\begin{align*}
\Ent_{2-2/p}(G)&\leq (p-1)\EE\Big[\int_\XX \Big((D_x F)^2_{+}(f(\eta+\delta_x)-\EE[F])_+^{p-2}+(D_x F)^2_{-}(f(\eta)-\EE[F])_+^{p-2}\Big)\,\mu(\dint x)\Big]\\
&= (p-1)\Big[\int_\XX \EE\Big((D_x F)^2_{+}(f(\eta+\delta_x)-\EE[F])_+^{p-2}\Big)\,\mu(\dint x)\\
&\qquad+\EE\Big((f(\eta)-\EE[F])_+^{p-2}\int_\XX(D_x F)^2_{-}\,\mu(\dint x)\Big)\Big].
\end{align*}
In order to transform the first summand we use the Mecke formula for Poisson processes \cite[Theorem 4.1]{LP} and by the definition of the difference operator we get
\begin{align*}
    &\int_\XX \EE\Big((F(\eta+\delta_x)-F(\eta))^2_{+}(f(\eta+\delta_x)-\EE[F])_+^{p-2}\Big)\,\mu(\dint x)\\
    &\qquad=\EE\int_\XX \Big((F(\eta)-F(\eta-\delta_x))^2_{+}(f(\eta)-\EE[F])_+^{p-2}\Big)\,\eta(\dint x).
\end{align*}
This leads to the inequality
\begin{align*}
\Ent_{2-2/p}(G)&\leq (p-1)\EE\Big[(f(\eta)-\EE[F])_+^{p-2}\Big(\int_\XX \Big((F(\eta)-F(\eta-\delta_x))^2_{+}\,\eta(\dint x)+\int_\XX(D_x F)^2_{-}\,\mu(\dint x)\Big)\Big]\\
&=(p-1)\EE\Big[(F-\EE[F])_+^{p-2}V^+\Big].
\end{align*}
In the last step we apply H\"older's inequality with ${p\over p-2}$ and ${p\over 2}$ to obtain
\begin{align*}
\Ent_{2-2/p}(G)=\|(F-\EE[F])_+\|^p_p-\|(F-\EE[F])_+\|^{p}_{p-1}
\leq(p-1)\|(F-\EE[F])_+\|_p^{p-2}\|V^+\|_{p/2},
\end{align*}
which completes the proof.
\end{proof}

\section{Moment and concentration inequalities for Poisson functionals}\label{sec:MomentConcentrationBounds}

The goal of this section is to derive moment and concentration inequalities for Poisson functionals $F=f(\eta)$, where $\eta$ is a Poisson process with $\sigma$-finite intensity measure $\mu$ over some measurable space $\XX$. In our arguments we essentially follow \cite{BBLM}. In particular, let us introduce the constants
\[
\kappa_p := \frac{1}{2}\Big(1 - \Big(1 - \frac{1}{p}\Big)^{p/2}\Big)^{-1}
\]
for $p > 1$. It is not hard to check that $\kappa_p$ is strictly increasing in $p$ and that
\begin{align*}
    \lim_{p\to 1} \kappa_p = {1\over 2},\qquad\qquad \lim_{p\to \infty} \kappa_p = \frac{\sqrt{e}}{2(\sqrt{e}-1)}=:\kappa.
\end{align*}
Furthermore, besides $V^+$ defined in Proposition \ref{prop:LpEstimate} we shall need the quantities
\begin{align*}
    V^- &:= \int_\XX (D_x F)_+^2\, \mu (\dint x) + \int_\XX (F(\eta) - F(\eta - \delta_x))_-^2 \,\eta(\dint x),\\
    V &:= \int_\XX (D_x F)^2 \,\mu (\dint x) + \int_\XX (F(\eta) - F(\eta - \delta_x))^2 \,\eta(\dint x).
\end{align*}

\begin{theorem}[Moment bounds]\label{prop:ConcentrationLpBounds}
Let $p\geq 2$ and $F\in L^{1}(\PP_\eta)$. Then
\begin{align}
\lVert (F-\EE[F])_+ \rVert_p &\le \sqrt{2\kappa p \lVert V^+ \rVert_{p/2}} = \sqrt{2\kappa p} \,\lVert \sqrt{V^+} \rVert_{p},\label{momineq+}\\
\lVert (F-\EE[F])_- \rVert_p &\le \sqrt{2\kappa p \lVert V^- \rVert_{p/2}} = \sqrt{2\kappa p} \,\lVert \sqrt{V^-} \rVert_{p}\label{momineq-},\\
\lVert F-\EE[F] \rVert_p &\le \sqrt{8\kappa p \lVert V \rVert_{p/2}} = \sqrt{8\kappa p}\, \lVert \sqrt{V} \rVert_{p}\label{momineq+-}.
\end{align}
\end{theorem}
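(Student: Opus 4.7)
The plan is to prove \eqref{momineq+} by a recursive induction on $p$ using Proposition~\ref{prop:LpEstimate}, and then to deduce \eqref{momineq-} by applying \eqref{momineq+} to $-F$ and \eqref{momineq+-} by the triangle inequality.

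\textbf{Main step: proof of \eqref{momineq+}.} Set $a_p := \lVert (F-\EE[F])_+\rVert_p$ and $B_p := \lVert V^+\rVert_{p/2}$; both are non-decreasing in $p\in[2,\infty)$ by Lyapunov's inequality. Proposition~\ref{prop:LpEstimate} gives the fundamental recursion
\[
a_p^p \le a_{p-1}^p + (p-1) B_p\, a_p^{p-2},\qquad p\ge 2.
\]
The goal is to show $a_p^2 \le 2\kappa\, p\, B_p$ by induction on $p$ (treated as a continuous parameter via a standard infimum argument); since $\kappa_p\le\kappa$, this implies the claim. For the base case $p=2$, the Poincar\'e inequality~\eqref{eq:PoincareIneq} together with Mecke's formula (which identifies the $\eta$-integral in $V^+$ with the $\mu$-integral of $(D_xF)_+^2$) yields $\EE[V^+]=\EE[\int_\XX(D_xF)^2\mu(\dint x)]$, hence $a_2^2\le \VV(F)\le\EE[V^+]= B_2\le 4\kappa B_2$.

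For the inductive step, fix $p>2$ and assume the bound at all levels $q\in[2,p)$. Suppose toward contradiction that $a_p^2>2\kappa p B_p$, and define $g(x):=x^{p/2}-(p-1)B_p\, x^{(p-2)/2}$. A short calculation using $g'(x)=\tfrac12 x^{(p-4)/2}\bigl(px-(p-1)(p-2)B_p\bigr)$ shows $g$ is strictly increasing on $[2\kappa p B_p,\infty)$. Combining the recursion with the induction hypothesis $a_{p-1}^p\le (2\kappa(p-1)B_p)^{p/2}$ (which uses $B_{p-1}\le B_p$) forces
\[
g(2\kappa p B_p) < g(a_p^2)=a_p^p-(p-1)B_p\, a_p^{p-2}\le (2\kappa(p-1)B_p)^{p/2}.
\]
The contradiction is reached by verifying the reverse \emph{saturation inequality}
\[
g(2\kappa p B_p) = B_p^{p/2}(2\kappa p)^{(p-2)/2}\bigl(2\kappa p-(p-1)\bigr) \ge (2\kappa(p-1)B_p)^{p/2}.
\]
After cancellation this reduces to the clean algebraic inequality
\[
\kappa \ge \kappa_p\bigl(1-1/p\bigr),
\]
which, using $2\kappa_p=(1-(1-1/p)^{p/2})^{-1}$, follows from the fact that $p\mapsto\kappa_p(1-1/p)$ is non-decreasing on $[2,\infty)$ with limit $\kappa=\sqrt e/(2(\sqrt e-1))$ as $p\to\infty$.

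\textbf{Deduction of \eqref{momineq-}.} Apply \eqref{momineq+} to $\tilde F:=-F$. Since $(\tilde F-\EE[\tilde F])_+=(F-\EE[F])_-$, $D_x\tilde F=-D_xF$, and $\tilde F(\eta)-\tilde F(\eta-\delta_x)=-(F(\eta)-F(\eta-\delta_x))$, the quantity $V^+$ formed from $\tilde F$ coincides with $V^-$ formed from $F$, and the bound follows at once.

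\textbf{Deduction of \eqref{momineq+-}.} From $x_+^2+x_-^2=x^2$ we obtain $V^\pm\le V$ pointwise. The triangle inequality combined with \eqref{momineq+} and \eqref{momineq-} gives
\[
\lVert F-\EE[F]\rVert_p\le \lVert (F-\EE[F])_+\rVert_p+\lVert (F-\EE[F])_-\rVert_p\le 2\sqrt{2\kappa p\lVert V\rVert_{p/2}}=\sqrt{8\kappa p\lVert V\rVert_{p/2}}.
\]

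\textbf{Main obstacle.} The heart of the argument is the saturation inequality, which pins down why the constant $\kappa$ takes its specific value. Identifying it from the recursion and verifying the monotonicity of $\kappa_p(1-1/p)$ is elementary but delicate: the limiting identity $2\kappa(1-e^{-1/2})=1$ shows that the inequality $\kappa\ge\kappa_p(1-1/p)$ is asymptotically tight as $p\to\infty$, so no softer real-analytic inequality can be substituted for this step.
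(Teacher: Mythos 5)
Your argument is in substance the paper's own: both iterate the recursion of Proposition~\ref{prop:LpEstimate} starting from the Poincar\'e/Mecke base case and close the induction by an algebraic inequality that is precisely the definition of $\kappa_p$. The paper phrases the closing step as concavity of $f_p(x)=(1-1/p)^{p/2}+\tfrac{1}{2\kappa_p}x^{1-2/p}-x$ together with $f_p(1)=0$; you phrase it as monotonicity of $g$ plus the saturation inequality $\kappa\ge\kappa_p(1-1/p)$. These are equivalent, and your inequality in fact follows immediately from $\kappa_p\le\kappa$ and $1-1/p\le 1$, so the (unproved) monotonicity of $p\mapsto\kappa_p(1-1/p)$ is not needed. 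The deductions of \eqref{momineq-} and \eqref{momineq+-} coincide with the paper's.

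The one point that does not close as written is the range $p\in(2,3)$: there the recursion \eqref{Lpest} calls on $a_{p-1}$ with $p-1\in(1,2)$, which your induction hypothesis ``at all levels $q\in[2,p)$'' does not cover, so the bound $a_{p-1}^p\le(2\kappa(p-1)B_p)^{p/2}$ is not available from the hypothesis in that case. The paper avoids this by establishing the claim for all $p\in(1,2]$ in the base case (via $\lVert\cdot\rVert_p\le\lVert\cdot\rVert_2$, the Poincar\'e inequality and Mecke's formula) and then inducting over the intervals $(k,k+1]$. Your version is repaired the same way: for $p\in(2,3)$ one has $a_{p-1}\le a_2$ by the monotonicity you already invoked, whence $a_{p-1}^2\le B_2\le B_p\le 2\kappa(p-1)B_p$. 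With that amendment (and the usual reduction to the case where all moments involved are finite, which the paper also passes over), the proof is correct.
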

\begin{proof}
To see \eqref{momineq+}, we prove the slightly sharper estimate
\[
\lVert (F-\EE[F])_+ \rVert_p \le \sqrt{\Big(1-\frac{1}{p}\Big) 2 \kappa_p p \lVert V^+ \rVert_{p/2}}
\]
for any $p \ge 2$.  To this end, setting $c_p := 2 \lVert V^+ \rVert_{{p/2}\vee 1}(1-1/(p\vee2))$, we show by induction on $k$ that for all $k \in \mathbb{N}$ and all $p \in (k, k+1]$,
\begin{equation}\label{Ind}
\lVert (F-\EE[F])_+ \rVert_p \le \sqrt{p \kappa_{p\vee 2} c_p}.
\end{equation}
To start, take $k=1$, $F\in L^1(\PP_\eta)$ and apply H\"older's inequality and the $L^1$-version of the Poincar\'e inequality for Poisson point processes (see \eqref{eq:PoincareIneq} above). Together with the Mecke formula for Poisson processes \cite[Theorem 4.1]{LP}, this yields
\[
\lVert (F-\EE[F])_+ \rVert_p \le \lVert F-\EE[F] \rVert_2 \le \sqrt{\EE\Big[\int_\XX (D_x F)^2 \,\mu(\dint x)\Big]} = \sqrt{\lVert V^+ \rVert_1} \le \sqrt{p \kappa_{p\vee 2} c_p}.
\]

In the induction step, we assume that $F\in L^{p-1}(\PP_\eta)$ and that \eqref{Ind} holds for any integers smaller than some $k > 1$. Writing
\[
x_p := \lVert (F - \EE[F])_+ \rVert_p^p(p\kappa_{p\vee 2}c_p)^{-p/2},
\]
\eqref{Ind} is equivalent to $x_p \le 1$ for any $p \in (k, k+1]$. Moreover, \eqref{Lpest} can be rewritten as
\[
x_pp^{p/2}c_p^{p/2}\kappa_p^{p/2} \le x_{p-1}^{p/(p-1)}(p-1)^{p/2}c_{p-1}^{p/2}\kappa_{(p-1)\vee 2}^{p/2} + \frac{1}{2} x_p^{1-2/p}p^{p/2}c_p^{p/2}\kappa_p^{p/2-1}.
\]
Using $c_{p-1} \le c_p$ and $\kappa_{(p-1)\vee 2} \le \kappa_p$, this reduces to
\[
x_p \le x_{p-1}^{p/(p-1)}\Big(1 - \frac{1}{p}\Big)^{p/2} + \frac{1}{2\kappa_p} x_p^{1-2/p}.
\]
By induction, $x_{p-1} \le 1$, and hence it follows that
\[
x_p \le \Big(1 - \frac{1}{p}\Big)^{p/2} + \frac{1}{2\kappa_p} x_p^{1-2/p}.
\]
Noting that the function
\[
f_p (x):= \Big(1 - \frac{1}{p}\Big)^{p/2} + \frac{1}{2\kappa_p}x^{1-2/p} - x
\]
is strictly concave on $\mathbb{R}_+$, positive at $x=0$, $f_p(1) = 0$ and $f_p(x_p) \ge 0$, we obtain that $x_p \le 1$, which completes the proof of \eqref{momineq+}.

The result in \eqref{momineq-} simply follows by considering $-F$ instead of $F$ in \eqref{momineq+}. Now \eqref{momineq+-} is easily seen by triangle inequality, combining \eqref{momineq+} and \eqref{momineq-} and using that $0 \le V^+, V^- \le V$:
\[
\lVert F-\EE[F] \rVert_p \le \lVert (F-\EE[F])_+ \rVert_p + \lVert (F-\EE[F])_- \rVert_p \le 2 \sqrt{2\kappa p \lVert V \rVert_{p/2}}.
\]
This completes the proof.
\end{proof}

\begin{remark}\rm 
Theorem \ref{prop:ConcentrationLpBounds} yields the same $L^p$-bounds as \cite[Proposition 4.17]{AdamczakEtAl} up to constants, where $\sqrt{2\kappa}$ in \eqref{momineq+} and $\sqrt{8\kappa}$ in \eqref{momineq+-} are both replaced by $D_{4.17} = \sqrt{6\kappa}$.
\end{remark}

In particular, from the previous proposition we conclude the following concentration bounds for Poisson functionals, which recover Proposition 1.2, Corollary 3.3 (ii) and Corollary 3.4 (ii) from \cite{BP} (up to absolute constants).

\begin{corollary}[Concentration bounds]\label{cortb}
Let $F\in L^1(\PP_\eta)$. If $\PP$-almost surely $V^+ \le L$ or $V^- \le L$ for some $L > 0$, we have that
\begin{equation}\label{concineq}
\PP (F - \EE[F] \ge t) \le e^{-ct^2/L}\qquad \text{or}\qquad \PP (F - \EE[F] \le -t) \le e^{-ct^2/L}
\end{equation}
for every $t \ge 4\sqrt{\kappa}$ and $c = \log(2)/(8\kappa)$, respectively. Moreover, if $\PP$-almost surely $V \le L$ for some $L > 0$, we have that
\begin{equation}\label{concineq2}
\PP (|F - \EE[F]| \ge t) \le 2e^{-ct^2/L}
\end{equation}
for every $t \ge 0$ and $c = \log(2)/(16\kappa)$.
\end{corollary}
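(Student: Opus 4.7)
The plan is to upgrade the $L^p$-moment bounds of Theorem \ref{prop:ConcentrationLpBounds} to exponential tail bounds through Markov's inequality combined with a judicious choice of $p$. Under the hypothesis $V^+\leq L$ almost surely, one has $\lVert V^+\rVert_{p/2}\leq L$ for every $p\geq 2$, so Theorem \ref{prop:ConcentrationLpBounds} yields
\[
\lVert(F-\EE[F])_+\rVert_p \leq \sqrt{2\kappa pL}.
\]
Markov's inequality then gives
\[
\PP(F-\EE[F]\geq t) \leq \Big(\frac{\lVert(F-\EE[F])_+\rVert_p}{t}\Big)^p \leq \Big(\frac{2\kappa pL}{t^2}\Big)^{p/2}.
\]

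The key step is the optimisation: setting $p:=t^2/(4\kappa L)$ makes the fraction inside the parentheses equal to $1/2$, and the bound collapses to $2^{-p/2}=\exp(-\log(2)\,t^2/(8\kappa L))$, which is exactly the upper-tail estimate with $c=\log(2)/(8\kappa)$. This choice is admissible provided $p\geq 2$, i.e.\ $t^2\geq 8\kappa L$, which is guaranteed by the hypothesis $t\geq 4\sqrt{\kappa L}$ stated in the theorem. The lower-tail estimate under $V^-\leq L$ follows by applying the same argument to $-F$, which exchanges the roles of $V^+$ and $V^-$.

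For the two-sided bound under $V\leq L$, observe that $V^+,V^-\leq V$ almost surely, so both one-sided concentration inequalities hold simultaneously. A union bound gives
\[
\PP(|F-\EE[F]|\geq t) \leq 2\exp\Big(-\frac{\log(2)\,t^2}{8\kappa L}\Big)
\]
for $t\geq 4\sqrt{\kappa L}$. To remove the threshold restriction one halves the exponential constant: with $c=\log(2)/(16\kappa)$ the quantity $2e^{-ct^2/L}$ is at least $1$ precisely on the range $t\leq 4\sqrt{\kappa L}$, so the claimed bound is trivial there and is implied by the previous display for larger $t$.

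The only non-routine ingredient is the choice $p=t^2/(4\kappa L)$ in the Markov step; everything else amounts to bookkeeping of constants and the mild trick of doubling the pre-factor in the exponent to pass from a threshold-restricted bound to one valid for every $t\geq 0$.
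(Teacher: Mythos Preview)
Your proof is correct and follows essentially the same route as the paper: Markov's inequality applied to the moment bound of Theorem~\ref{prop:ConcentrationLpBounds}, followed by an optimisation in $p$. Your choice $p=t^2/(4\kappa L)$ differs from the paper's $p=t^2/(8\kappa L)$ (the paper normalises to $L=1$ first), but both produce the same exponent; your two-sided argument---halving the constant so that $2e^{-ct^2/L}\ge 1$ on the sub-threshold range---is a cleaner variant of the paper's trick of first taking a prefactor $4$ and then invoking a general inequality to reduce it to $2$. One small remark: the threshold you derive and quote is $t\ge 4\sqrt{\kappa L}$, which is indeed what the paper's own argument yields after undoing the normalisation $L=1$; the printed hypothesis $t\ge 4\sqrt{\kappa}$ in the corollary appears to be a typo.
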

\begin{proof}
This follows from standard arguments. We include the proof to demonstrate how the constants we obtain come up. For simplicity and without loss of generality, let us assume that $L=1$. First note that by Markov's inequality and Theorem \ref{prop:ConcentrationLpBounds},
\[
\PP (F- \EE[F] \ge t) \le \inf_{p \ge 2} \frac{\EE[(F-\EE[F])_+^p]}{t^p} \le \inf_{p \ge 2} \Big(\frac{\sqrt{2\kappa p}}{t}\Big)^p.
\]
Here, $\sqrt{2\kappa p}/t \le 1/2$ iff $p \le t^2/(8\kappa)$. Therefore, if $t^2/(8\kappa) \ge 2$, plugging in leads to
\[
\PP (F- \EE[F] \ge t) \le 2^{-t^2/(8\kappa)} = \exp\Big(-\frac{\log(2)}{8\kappa} t^2\Big).
\]
By the same arguments we may prove the bound on $\PP (F- \EE[F] \le -t)$ and consequently that
\[
\PP (|F- \EE[F]| \ge t) \le 4\exp\Big(-\frac{\log(2)}{8\kappa} t^2\Big)
\]
for every $t \ge 0$, where we have chosen the factor $4$ (instead of $2$) to extend the bound trivially to the range $t^2/(8\kappa) < 2$. This may be adjusted to the bound stated above by recalling that for any two constants $\gamma_1 > \gamma_2 > 1$, we have that for all $r \ge 0$ and $\gamma > 0$
\[
\gamma_1 \exp(-\gamma r) \le \gamma_2 \exp\Big(-\frac{\log(\gamma_2)}{\log(\gamma_1)}\gamma r\Big)
\]
whenever the left hand side is smaller or equal to $1$.
\end{proof}

\begin{remark}\rm 
As demonstrated in the proof of \eqref{concineq2} it is possible to extend the range of \eqref{concineq} to $t\geq 0$ by adding a prefactor $2$ to the right hand sides of both inequalities and choosing $c = \log(2)/(16\kappa)$. We decided to keep the restricted range of possible values for $t$ here and in what follows.
\end{remark}

\begin{remark}[Boundedness of $F$ vs.\ boundedness of $V^+$]\label{rm:BoundVF}\rm
Let us discuss the assumptions from Corollary \ref{cortb} in more detail. Here we focus on the condition $V^+ \le L$ and consider the simplest possible situation $\XX = \{x\}$, i.e., $\eta$ is a Poisson random variable with mean $\lambda > 0$. Clearly,
\[
V^+ = \lambda (F(\eta + 1) - F(\eta))_-^2 + \eta (F(\eta) - F(\eta -1))_+^2,
\]
and if we moreover assume $F$ to be non-decreasing, it therefore suffices to examine the quantities
\[
V^+(k) = k(F(k)-F(k-1))^2,\qquad k \in \mathbb{N}.
\]
First, the condition $V^+\leq L$ fails for the function $F(k) = k$, which is a priori clear since it would yield sub-Gaussian tails for a Poisson variable. Formally, $V^+(k) = k$, i.e., $V^+$ is unbounded. In this situation, the condition $V^+ \le L$ guarantees a suitable ``deformation'' of the Poisson random variable under consideration which in some sense ``flattens'' its tails. One may wonder whether boundedness of $V^+$ and boundedness of $F$ coincide. It turns out that neither implication is true.

To see this, first consider the function $F(k) := \sum_{j=1}^k \frac{1}{j}$. Clearly, $F$ is unbounded. However, it is easy to check that $V^+(k) = 1/k$ for any $k$, i.e., $V^+$ is bounded. On the other hand, consider the function
\[
F(k) = \sum_{j=1}^{\lfloor k^{1/5}\rfloor} \frac{1}{\lfloor j^{1/5}\rfloor^2}.
\]
By definition, $F(k)$ only changes its value if $k = m^5$ for some natural number $m$ and in this case, the value $1/m^2$ is added. In particular, $F$ is bounded. However, if $k = m^5$, we have $V^+(m^5) = m$, so that $V^+$ is unbounded.
\end{remark}

\begin{remark}[Self-bounding Poisson functionals]\rm 
As demonstrated in \cite[Proposition 4.18]{AdamczakEtAl}, Theorem \ref{prop:ConcentrationLpBounds} directly implies $L^p$-estimates and concentration inequalities for self-bounding Poisson functionals, where the latter also correspond to \cite[Corollary 3.6]{BP}. In fact, if $F\in L^1(\PP_\eta)$ and $\PP$-almost surely $V^+\leq cF^\alpha$ for some $\alpha\in[0,2)$ and $c\in[0,\infty)$, then $\|(F-\EE[F])_+\|_p \leq 2\sqrt{2c\kappa p}\,(\EE[F])^{\alpha\over 2} + (8c\kappa p)^{1\over 2-\alpha}$ for any $p\geq 2$ and
\begin{align*}
    \PP(F\geq \EE[F]+t) \leq 2\exp\Big\{-C\min\Big({t^2\over  c\cdot(\EE[F])^\alpha},{t^{2-\alpha}\over c}\Big)\Big\}
\end{align*}
for every $t\geq 0$, where $C\in(0,\infty)$ is an absolute numerical constant.
\end{remark}



\section{Applications to stochastic geometry models}\label{sec:Appl}

Our goal in this section is to demonstrate how the general results obtained in the previous section can be applied to concrete problems arising in stochastic geometry. We will discuss two models, namely Poisson polytopes and Poisson cylinders and although concentration inequalities for a number of geometric functionals of these two models are already known in the literature, some of the concentration bounds we obtain are new or better than the already existing ones. For both applications we will discuss the relation between our results and those known from the literature. 

\subsection{Poisson polytopes}

\begin{figure}
    \centering
    \includegraphics[width=0.25\columnwidth]{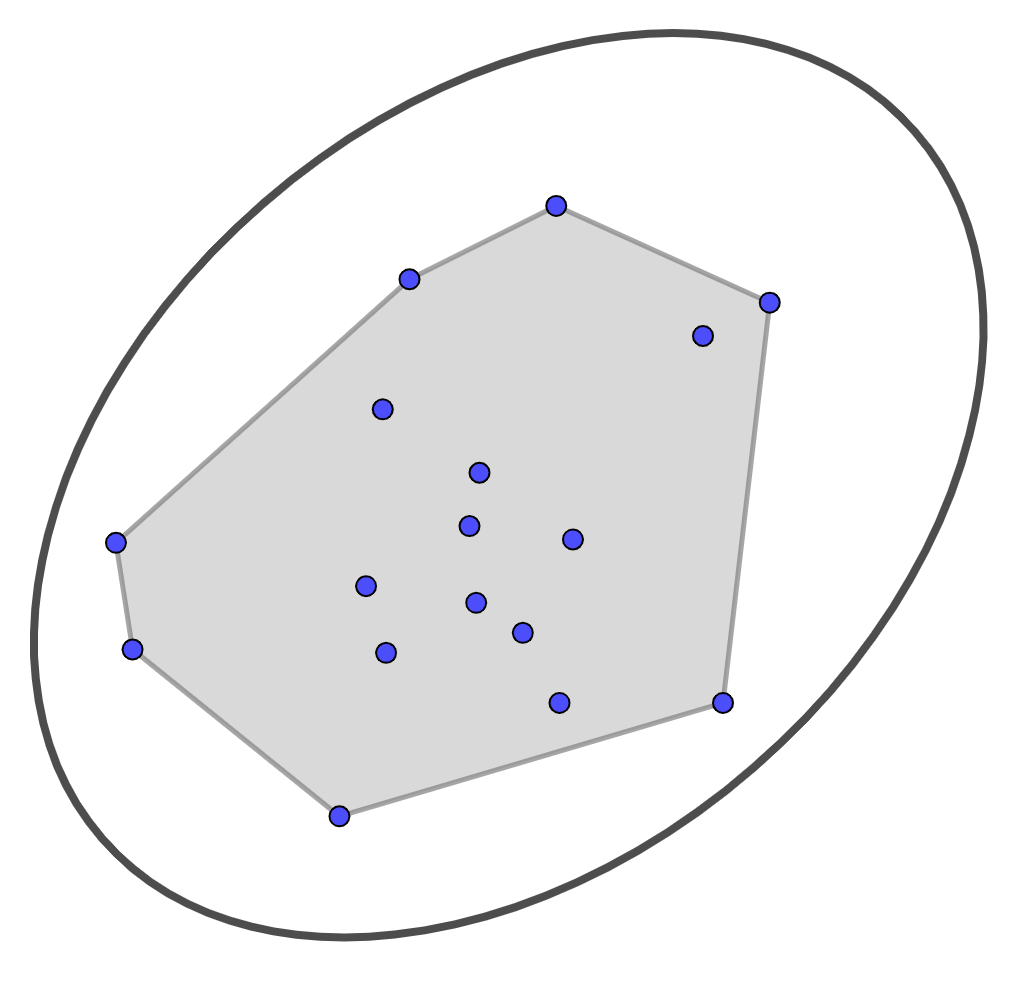}\hspace{3cm}
    \includegraphics[width=0.25\columnwidth]{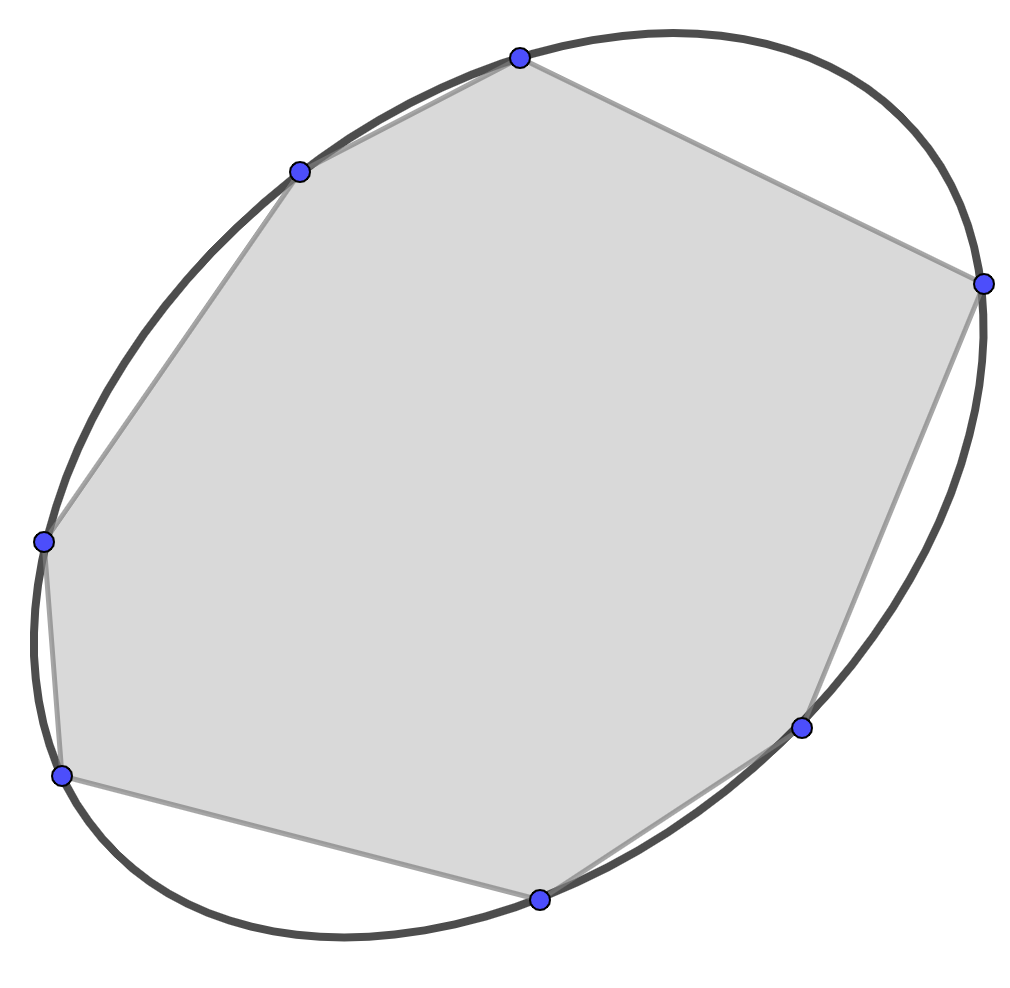}
    \caption{Illustration of the random polytope models (In) (left panel) and (Bd) (right panel) for $d=2$ and with $K$ being an ellipse.}
    \label{fig:Polytopes}
\end{figure}

Our first application is concerned with convex hulls of Poisson processes in $\RR^d$, $d\geq 1$. To describe the set-up, in comparison to the existing literature we take a rather general point of view, and let $\mu$ and $\zeta$ be probability measures on $\RR^d$ with the property that $\mu(H)=0$ and $\zeta(H)=0$ for each hyperplane $H\subset\RR^d$. Now, consider a Poisson process in $\RR^d$ with intensity measure $\gamma\mu$, where $\gamma\in(0,\infty)$ is some fixed intensity parameter. We are interested in concentration properties of the Poisson functional
\begin{equation}\label{eq:FConvHull}
F:=\zeta(\conv(\eta)),    
\end{equation}
where $\conv(\eta)$ stands for the convex hull of the support of $\eta$. Note that under our assumption on $\mu$, $\conv(\eta)$ is $\PP$-almost surely a simplicial polytope in $\RR^d$, meaning that each of its faces is a simplex. In what follows we will often use the notation $\cF_i(P)$, $i\in\{0,1,\ldots,d-1\}$, for the set of $i$-dimensional faces of a polytope $P\subset\RR^d$. Especially $\cF_0(P)$ is the set of vertices, $\cF_1(P)$ the set of edges and $\cF_{d-1}(P)$ the set of facets of $P$.

\begin{proposition}[Concentration for the $\zeta$-content]\label{prop:ConvexHull}
For $F$ as defined by \eqref{eq:FConvHull} one has that
$$
\PP(F\geq \EE[F]+t) \leq e^{-ct^2/(d+1)}\qquad\textup{and}\qquad \PP(F\leq \EE[F]-t)\leq e^{-ct^2/\gamma}
$$
for every $t\geq 4\sqrt{\kappa}$, where $c = \log(2)/(8\kappa)$.
\end{proposition}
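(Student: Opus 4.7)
The plan is to apply Corollary~\ref{cortb}; the task reduces to bounding $V^+$ and $V^-$ by suitable constants, namely $d+1$ and $\gamma$ respectively. As a first observation, $F = \zeta(\conv(\eta))$ takes values in $[0,1]$ and is monotone non-decreasing in $\eta$: inserting a point into $\eta$ can only enlarge $\conv(\eta)$, hence increase its $\zeta$-content. Therefore $D_x F \ge 0$ and $F(\eta) - F(\eta - \delta_x) \ge 0$ for every $x$, which kills the $(\cdot)_-$ terms, leaving
\begin{equation*}
V^+ = \int_{\RR^d}(F(\eta) - F(\eta-\delta_x))^2\,\eta(\dint x),
\qquad
V^- = \gamma \int_{\RR^d}(D_x F)^2\,\mu(\dint x).
\end{equation*}
Since both integrands take values in $[0,1]$, the elementary inequality $a^2 \le a$ (valid for $a \in [0,1]$) lets us trade squares for first-order increments.

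For the lower tail I would use that $D_x F = \zeta(\conv(\eta \cup \{x\})) - \zeta(\conv(\eta)) \le 1-F \le 1$, so
\begin{equation*}
V^- \le \gamma \int_{\RR^d} D_x F\,\mu(\dint x) \le \gamma \int_{\RR^d} 1\,\mu(\dint x) = \gamma,
\end{equation*}
as $\mu$ is a probability measure. Corollary~\ref{cortb} with $L = \gamma$ then yields $\PP(F \le \EE[F] - t) \le e^{-c t^2/\gamma}$ for $t \ge 4\sqrt{\kappa}$.

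The main step, and the only place where genuine convex geometry enters, is the dimension-dependent bound $V^+ \le d+1$. Here I would apply $a^2 \le a$ with $a = F(\eta) - F(\eta - \delta_x)$ and then swap the order of integration:
\begin{equation*}
V^+ \le \sum_{x \in \eta} \zeta\big(\conv(\eta) \setminus \conv(\eta - \delta_x)\big) = \int_{\RR^d} N(y)\,\zeta(\dint y),
\end{equation*}
where $N(y) := \#\{x \in \eta : y \in \conv(\eta),\ y \notin \conv(\eta - \delta_x)\}$ counts the points of $\eta$ that are \emph{essential} for $y$ to lie in the hull. Carathéodory's theorem now supplies a subset $\{x_{i_1},\ldots,x_{i_k}\} \subseteq \eta$ with $k \le d+1$ such that $y \in \conv(x_{i_1},\ldots,x_{i_k})$; any point of $\eta$ outside this subset can be removed without ejecting $y$, so only the $x_{i_j}$ can possibly be essential. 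Therefore $N(y) \le d+1$, giving $V^+ \le (d+1)\zeta(\conv(\eta)) \le d+1$, and Corollary~\ref{cortb} with $L = d+1$ produces the claimed upper-tail inequality. I expect this Carathéodory argument to be the pivotal step: it is what separates the upper-tail bound (which scales with dimension) from the lower-tail bound (which scales with the intensity), and it exploits the specifically geometric nature of the functional $F$.
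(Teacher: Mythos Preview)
Your argument is correct, and for the lower tail it coincides with the paper's. For the upper tail, however, you take a genuinely different and more economical route. The paper bounds $F(\eta)-F(\eta-\delta_x)$ by an explicit simplicial cover: it builds, for each vertex $x$, a family $S(x,\eta)$ of $d$-simplices with apex $x$ whose bases are the ``new'' facets appearing when $x$ (and certain interior points) are removed, then argues that the simplices in $\bigcup_x S(x,\eta)$ have pairwise disjoint interiors and that each is counted at most $d+1$ times (once per vertex), whence $V^+\le (d+1)\sum_{S}\zeta(S)\le d+1$. Your approach bypasses this construction entirely: after the $a^2\le a$ reduction you swap the sum and the $\zeta$-integral and invoke Carath\'eodory's theorem to see that each point $y\in\conv(\eta)$ can be ejected by the removal of at most $d+1$ points of $\eta$, giving $N(y)\le d+1$ directly. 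The Carath\'eodory argument is shorter, requires no facial structure of $\conv(\eta)$, and in particular does not use the general-position assumption $\mu(H)=0$ (so it would work even for non-simplicial $\conv(\eta)$). The paper's simplicial decomposition, on the other hand, is more constructive and foreshadows the face-by-face analysis needed in the subsequent proposition on intrinsic volumes, where a bare Carath\'eodory bound would not suffice.
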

\begin{proof}
We start by investigating $V^+$, which in our case is given by
\begin{align*}
    V^+ &= \gamma\int_{\RR^d}(D_xF)_-^2\,\mu(\dint x)+\int_{\RR^d}(F(\eta)-F(\eta-\delta_x))_+^2\,\eta(\dint x)=\int_{\RR^d}(F(\eta)-F(\eta-\delta_x))_+^2\,\eta(\dint x),
\end{align*}
since $\PP$-almost surely $D_xF\geq 0$ for $\mu$-almost all $x\in\RR^d$. It should be mentioned that in case when there are less than $d+1$ points in the support of $\eta$ the convex hull $\conv(\eta)$ is contained in some hyperplane of $\RR^d$ and by our assumptions on the probability measure $\zeta$ we have $F\equiv 0$ $\PP$-almost surely, so that the result becomes trivial. As a consequence, from now on we will assume that there are at least $d+1$ points in the support of $\eta$. For $x\in\eta$ let $N_1(x,\eta)$ be the set of all vertices of $\conv(\eta)$, which are connected with $x$ by an edge, and let $N_2(x,\eta)$ be the set of all points of the process $\eta$, which are contained in the interior of $\conv(x\cup N_1(x,\eta))\setminus \conv(N_1(x,\eta))$. Clearly, $N_1(x,\eta)=\varnothing$ iff $x$ is not on the boundary of $\conv(\eta)$. Also, $N_2(x,\eta)$ can be empty in general. Further we denote by $M(x,\eta)$ the set of facets (i.e.\ $(d-1)$-dimensional faces) of $\conv(\eta\setminus (N_2(x,\eta)\cup \{x\}))$, which are not facets of $\conv(\eta)$. In other words, $M(x,\eta)$ is the set of all facets of the random polytope, which "appear" after removing $x$ from $\eta$ together with the points in $N_2(x,\eta)$. It should be mentioned, that since $\mu(H)=0$ for each hyperplane $H\subset\RR^d$, each facet from $M(x,\eta)$ is $\PP$-almost surely a $(d-1)$-dimensional simplex.

Next, we consider the set of simplices $S(x,\eta):=\{\conv(x,E)\colon E\in M(x,\eta)\}$, that is, simplices with apex $x$ whose bases are facets from $M(x,\eta)$. We now observe that $\PP$-almost surely 
$$
F(\eta)-F(\eta-\delta_x)\leq \zeta\Big(\bigcup_{S\in S(x,\eta)}S\Big)
$$
for any $x\in\eta$. Since the simplices in $S(x,\eta)$ all have disjoint interiors, it follows that
$$
F(\eta)-F(\eta-\delta_x)\leq \sum_{S\in S(x,\eta)}\zeta(S),
$$
and hence
\begin{align*}
    V^+ \leq \int_{\RR^d}  \zeta\Big(\bigcup_{S\in S(x,\eta)}S\Big)^2\,\eta(\dint x) \leq \int_{\RR^d} \sum_{S\in S(x,\eta)}\zeta(S)\,\eta(\dint x)=\sum_{x\in\eta}\sum_{S\in S(x,\eta)}\zeta(S),
\end{align*}
where we used that $\zeta(\bigcup_{S\in S(x,\eta)}S)\leq 1$. Considering the union $S(\eta):=\bigcup_{x\in\eta}S(x,\eta)$ we note, that each simplex $S\in S(\eta)$ is counted at most $d+1$ times in the above sum, according to the number of its vertices. Thus,
$$
V^+ \leq (d+1)\sum_{S\in S(\eta)}\zeta(S).
$$
It is clear, that all vertices of a simplex $S\in S(\eta)$ belong to $\cF_0(\conv(\eta))$. Moreover, by construction each simplex $S\in S(\eta)$ has at least one facet $E$ with corresponding opposite vertex $x$, such that there are no other vertices of the $\conv(\eta)$ lying on the same side of the affine hull generated by $E$, where $x$ is lying. This means, that if the intersection of two simplices $S_1,S_2\in S(\eta)$ has non-empty interior, then they have a common vertex $x$ and, hence, they coincide. Thus, all simplices in $S(\eta)$ have disjoint interiors which together with the assumption that $\zeta(H)=0$ for each hyperplane $H$ leads to the bound $V^+\leq d+1$.
Thus, the result follows from Corollary \ref{cortb} by choosing $L=d+1$ there.

In the same manner we can consider $V^{-}$ and since $\PP$-almost surely $D_xF\geq 0$ for $\mu$-almost all $x\in\RR^d$ we have
$$
V^{-}=\gamma\int_{\RR^d}(D_xF)_+^2\,\mu(\dint x).
$$
Since $\zeta$ is a probability measure on $\RR^d$ we have 
$$
D_xF=\zeta(\conv(\eta+\delta_x))-\zeta(\conv(\eta))\leq 1
$$
and, thus,
$$
V^{-}\leq\gamma.
$$
Applying Corollary \ref{cortb} with $L=\gamma$ we obtain a lower tail.
\end{proof}

Let us consider two specific cases, well known from the existing literature, to which Proposition \ref{prop:ConvexHull} can be applied, see Figure \ref{fig:Polytopes} for illustrations. Here and in the sequel, $\ell_d$ denotes the Lebesgue measure on $\RR^d$.
\begin{itemize}
\item[(In)] Let $K\subset\RR^d$ be a convex body with volume $\ell_d(K)\in(0,\infty)$ and choose for $\mu$ and $\zeta$ the restriction of the $d$-dimensional Lebesgue measure to $K$, normalized by $\ell_d(K)^{-1}$. In this case sub-Gaussian concentration bounds for $F$ are known from \cite{Vu}, but they include a polynomial error term, see also \cite{ReitznerSurvey}; for $K$ being the $d$-dimensional unit ball, exponential concentration inequalities without polynomial error terms are the content of \cite{GroteBall}. We remark that for the random polytope model (In) one has that
$$
\ell_d(K)-\EE[\ell_d(\conv(\eta))]=c_{d,K}\gamma^{-2/(d+1)}(1+o(1)),\qquad\textup{as}\qquad\gamma\to\infty,
$$
if $K$ has a $C^2$-smooth boundary with everywhere strictly positive Gaussian curvature and that 
$$
\ell_d(K)-\EE[\ell_d(\conv(\eta))]=c_{d,K}'\gamma^{-1}(\log\gamma)^{d-1}(1+o(1)),\qquad\textup{as}\qquad\gamma\to\infty,
$$
if $K$ is a polytope. Here, $c_{d,K}$ and $c_{d,K}'$ are explicitly known constants depending on $d$ and on $K$, see \cite{ReitznerSurvey}.
\item[(Bd)] Let $K\subset\RR^d$ be a convex body with volume $\ell_d(K)\in(0,\infty)$ whose boundary is a $C^2$-smooth submanifold with everywhere strictly positive Gaussian curvature and choose for $\mu$ the restriction of the $(d-1)$-dimensional Hausdorff measure $\sH^{d-1}$ to the boundary $\partial K$ of $K$, normalized by $\sH^{d-1}(K)^{-1}$. For $\zeta$ we choose, as for model (In), the normalized Lebesgue measure on $K$. Sub-Gaussian concentration bounds with again polynomial error terms for $F$ in this situation were proved in \cite{RVW}, purely exponential inequalities were not known until now. Again, we remark that in this case 
$$
\ell_d(K)-\EE[\ell_d(\conv(\eta))]=c_{d,K}''\gamma^{-2/(d-1)}(1+o(1)),\qquad\textup{as}\qquad\gamma\to\infty,
$$
where $c_K''$ is another constant depending on $d$ and $K$, see \cite{ReitznerSurvey,RVW}.
\end{itemize}

Suppose now that we are in one of the two situations just described. In this case we can also consider the so-called intrinsic volumes of $\conv(\eta)$. In general, for a compact convex subset $K\subset\RR^d$ the \textbf{$i$-th intrinsic volume} of $K$, $i\in\{0,1,\ldots,d\}$, is defined as
$$
V_i(K) := {d\choose i}{\ell_d(B^d)\over\ell_i(B^i)\ell_{d-i}(B^{d-i})}\EE[\ell_i(K|E)]
$$
where $B^k$, $k\in\NN$, stands for the $k$-dimensional unit ball, $E$ is a uniformly distributed $i$-dimensional random linear subspace of $\RR^d$ and $K|E$ denotes the orthogonal projection of $K$ onto $E$. In particular, $V_d(K)=\ell_d(K)$ is the volume of $K$, $\sH^{d-1}(\partial K)=2V_{d-1}(K)$ its surface content, $V_1(K)$ is a constant multiple of the mean width of $K$ and $V_0(K)=1$ as long as $K\neq\varnothing$. The intrinsic volumes admit a unique additive extension to the class of finite unions of convex sets, which is denoted by the same symbol. For this and for further background material explaining the central role of intrinsic volumes in convex geometry we refer the reader to \cite[Chapter 4]{Schneider} or \cite[Chapter 14]{SW}. Our next result provides new concentration bounds for all intrinsic volumes $V_i(\conv(\eta))$, $i\in\{1,\ldots,d-1\}$ of $\conv(\eta)$ for both random polytope models (In) and (Bd), thereby partially generalizing Proposition \ref{prop:ConvexHull} from which a concentration inequality for $V_d(\conv(\eta))$ follows. We remark that in the special case where $K=B^d$ is the $d$-dimensional unit ball weaker concentration bounds for the intrinsic volumes for the random polytope model (In) are known from \cite{GroteBall}.

\begin{proposition}[Concentration for the intrinsic volumes]
Suppose we are in one of the situations (In) or (Bd) and consider the Poisson functional $F:=V_i(\conv(\eta))$ for some $i\in\{1,\ldots, d-1\}$. Then
$$
\PP(F\geq \EE[F]+t) \leq \exp\Big\{-{c\,t^2\over (i+1)V_i(K)^2}\Big\},\qquad \PP(F\leq \EE[F]-t) \leq \exp\Big\{-{c\,t^2\over \gamma\,V_i(K)^2}\Big\}
$$
for every $t\geq 4\sqrt{\kappa}$, where $c = \log(2)/(8\kappa)$.
\end{proposition}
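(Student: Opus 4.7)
The plan is to mirror the proof of Proposition~\ref{prop:ConvexHull}, with the extra ingredient being Kubota's formula
$$V_i(A)=c_{d,i}\int_{G(d,i)}\ell_i(A|E)\,\nu(\dint E),\qquad c_{d,i}=\binom{d}{i}\frac{\ell_d(B^d)}{\ell_i(B^i)\ell_{d-i}(B^{d-i})},$$
where $\nu$ is the rotation-invariant probability measure on the Grassmannian $G(d,i)$. This reduces the intrinsic-volume estimate to the $i$-dimensional volume case already treated in Proposition~\ref{prop:ConvexHull}. Monotonicity of $V_i$ on convex bodies makes $F=V_i(\conv(\eta))$ monotone in $\eta$, so $D_xF\ge 0$ $\mu$-a.s.\ and, exactly as in the proof of Proposition~\ref{prop:ConvexHull}, one of the two integrals in each of $V^+$ and $V^-$ vanishes, leaving
$$V^+=\int_{\RR^d}(F(\eta)-F(\eta-\delta_x))^2\,\eta(\dint x),\qquad V^-=\gamma\int_{\RR^d}(D_xF)^2\,\mu(\dint x).$$

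The lower tail is then immediate: $0\le D_xF\le V_i(K)$ follows from monotonicity together with $\conv(\eta+\delta_x)\subseteq K$, and since $\mu$ is a probability measure this gives $V^-\le \gamma V_i(K)^2$, so Corollary~\ref{cortb} with $L=\gamma V_i(K)^2$ yields the desired lower-tail inequality.

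For the upper tail the target is $V^+\le(i+1)V_i(K)^2$. Using $0\le F(\eta)-F(\eta-\delta_x)\le V_i(K)$, I linearise the square,
$$V^+\le V_i(K)\sum_{x\in\eta}\big(F(\eta)-F(\eta-\delta_x)\big),$$
insert Kubota's formula, and interchange the sum with the Grassmannian integral to rewrite the remaining sum as
$$c_{d,i}\int_{G(d,i)}\sum_{x\in\eta}\big(\ell_i(\conv(\eta|E))-\ell_i(\conv((\eta-\delta_x)|E))\big)\,\nu(\dint E).$$
Write $\pi_E$ for the orthogonal projection onto $E$. Since $\mu$ vanishes on every hyperplane of $\RR^d$, for $\nu$-a.e.\ $E$ the map $\pi_E$ is $\PP$-a.s.\ injective on $\supp\eta$, so $(\eta-\delta_x)|E=\eta|E-\delta_{\pi_E(x)}$ and the inner sum equals $\sum_{y\in\eta|E}\big(\ell_i(\conv(\eta|E))-\ell_i(\conv((\eta|E)-\delta_y))\big)$. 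The purely combinatorial part of the proof of Proposition~\ref{prop:ConvexHull}, applied in $E\cong\RR^i$ to the configuration $\eta|E$ — which is in general position in $E$ because $\mu$ gives zero mass to the pre-images $\pi_E^{-1}(H)$ of all hyperplanes $H\subset E$, these pre-images themselves being hyperplanes of $\RR^d$ — bounds this sum by $(i+1)\sum_{S\in S(\eta|E)}\ell_i(S)\le(i+1)\ell_i(\conv(\eta|E))\le(i+1)\ell_i(K|E)$, using that the tent simplices have pairwise disjoint interiors inside $\conv(\eta|E)$ and that each belongs to the stars of at most $i+1$ of its vertices. Integrating against $\nu$ and applying Kubota's formula once more to $K$ itself yields $(i+1)V_i(K)$, hence $V^+\le(i+1)V_i(K)^2$, and Corollary~\ref{cortb} with $L=(i+1)V_i(K)^2$ finishes the proof.

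The main obstacle — and essentially the only novelty beyond Proposition~\ref{prop:ConvexHull} — is the $\PP$-almost sure injectivity of $\pi_E$ on $\supp\eta$ for $\nu$-almost every $E$: two distinct points $x,y\in\supp\eta$ project to the same image exactly when $y\in x+E^\perp$, a $(d-i)$-dimensional affine subspace contained in some hyperplane of $\RR^d$, so that a Mecke-type computation combined with the hyperplane hypothesis on $\mu$ forces this event to have probability zero. All remaining steps are a mechanical transcription of the proof of Proposition~\ref{prop:ConvexHull} into dimension $i$.
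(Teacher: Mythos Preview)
Your proof is correct and reaches the same bound $V^+\le (i+1)V_i(K)^2$ as the paper, but by a genuinely different route. The paper works intrinsically with the external--angle representation $V_i(P)=\sum_{G\in\cF_i(P)}\ell_i(G)\,\gamma(G,P)$: it shows that removing a vertex $x$ can only increase the external angle at any surviving $i$--face, so that $F(\eta)-F(\eta-\delta_x)\le \sum_{G\in\cF_i(P),\,x\in G}\ell_i(G)\,\gamma(G,P)$, and then sums over $x\in\eta$ noting that each simplicial $i$--face has exactly $i+1$ vertices. Your argument instead uses Kubota's formula to push everything down to $i$--dimensional volume on a random subspace $E$ and then recycles the tent--simplex combinatorics already established for Proposition~\ref{prop:ConvexHull}, applied to the projected configuration $\eta|E$. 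The price you pay is the measure--theoretic step ensuring that, for $\nu$--almost every $E$, the projection $\pi_E$ is $\PP$--almost surely injective on $\supp\eta$ and the image $\eta|E$ is in general position; this is handled correctly via the hyperplane hypothesis on $\mu$ and Fubini/Mecke, though it deserves a line or two more than you gave it. The gain is that you avoid the somewhat delicate external--angle monotonicity argument (the comparison of ${\rm pos}(n_j(G))$ and ${\rm pos}(\hat n_j(G))$) and reduce directly to the case already done. Either approach also covers the low--cardinality edge cases $\lvert\supp\eta\rvert\le i+1$ without modification.
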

\begin{proof}
Let $\mu$ be the normalized Lebesgue measure on $K$ if we consider the model (In) and the normalized $(d-1)$-dimensional Hausdorff measure on $\partial K$ in case of the model (Bd). First of all we note that since the intrinsic volumes are monotone under set inclusion on the space of convex sets we have that, $\PP$-almost surely and for $\mu$-almost all $x\in \supp(\mu)$,
\begin{align*}
D_xF=V_i(\conv(\eta+\delta_x))-V_i(\conv(\eta))\ge 0.
\end{align*}
Suppose from now on that $\supp(\eta)$ consists of at least $d+1$ distinct points (the other cases will be discussed below). Next, we recall from \cite[Equation (14.14)]{SW} that for a polytope $P\subset\RR^d$ the $i$-th intrinsic volume can be represented as
$$
V_i(P) = \sum_{G\in\cF_i(P)}\ell_i(G)\,\gamma(G,P),
$$
where we recall that $\cF_i(P)$ is the set of all $i$-dimensional faces of $P$ and $\gamma(G,P)$ stands for the external angle at the face $G$ with respect to $P$. The latter is defined as the solid angle of the convex cone of normal vectors of $P$ at an arbitrary point from the relative interior of $F$, see \cite[Equation (14.10)]{SW}. Applying this formula to the almost surely $d$-dimensional polytopes $P:=\conv(\eta)$ and $P_x:=\conv(\eta-\delta_x)$, where $x\in\supp(\eta)$, we deduce that \begin{align*}
  F(\eta)-F(\eta-\delta_x) &= \sum_{G\in\cF_i(P)}\ell_i(G)\,\gamma(G,P) - \sum_{G\in\cF_i(P_x)}\ell_i(G)\,\gamma(G,P_x)\\
  &= \sum_{G\in\cF_i(P)\atop G\not\in \cF_i(P_x)}\ell_i(G)\,\gamma(G,P) + \sum_{G\in\cF_i(P)\cap\cF_i(P_x)}\ell_i(G)\,[\gamma(G,P)-\gamma(G,P_x)]\\
  &\hspace{4cm}\;- \sum_{G\in\cF_i(P_x)\atop G\not\in \cF_i(P)}\ell_i(G)\,\gamma(G,P_x)\\
  &\leq \sum_{G\in\cF_i(P)\atop G\not\in \cF_i(P_x)}\ell_i(G)\,\gamma(G,P) + \sum_{G\in\cF_i(P)\cap\cF_i(P_x)}\ell_i(G)\,[\gamma(G,P)-\gamma(G,P_x)].
\end{align*}
We shall argue now that the second sum in the last expression is non-positive. To do this, we denote by $n_1(G),\ldots,n_{m}(G)\in\RR^d$ the outer unit normal vectors of the facets $E_1,\ldots,E_{m}\in\cF_{d-1}(P)$ of $P$ containing a face $G\in \cF_i(P)\cap \cF_i(P_x)$ and by $\hat n_1(G),\ldots,\hat n_{k}(G)$ the outer unit normal vectors of the facets $\hat E_1,\ldots, \hat E_{k}\in\cF_{d-1}(P_x)$ of $P_x$ containing $G$. Further we note that each of the facets $\hat E_j$, $1\leq j\leq k$, either coincides with one of the facets $E_j$, $1\leq j\leq k$, or its relative interior belongs to the interior of $P$. This in particular means that ${\rm pos}(n_1(G),\ldots,n_{m}(G))\subseteq {\rm pos}(\hat n_1(G),\ldots,\hat n_{k}(G))$, where ${\rm pos}(\,\cdot\,)$ stands for the positive hull. Thus,
$$
\gamma(G,P) = \alpha_{d-i-1}({\rm pos}(\hat n_1(G),\ldots,\hat n_{m}(G)))\leq \alpha_{d-i}({\rm pos}(\hat n_1(G),\ldots,\hat n_{k}(G))=\gamma(G,P_x),
$$
with $\alpha_{k}(\,\cdot\,):=\cH^{k-1}(\,\cdot\,\cap\SS^{d-1})/\omega_{k}$ and with $\omega_k:=\sH^k(\SS^k)$ denoting the $k$-dimensional Hausdorff measure of the $k$-dimensional unit sphere $\SS^k$. This shows that, indeed, the second sum in the above bound for $F(\eta)-F(\eta-\delta_x)$ is non-positive, and hence that
$$
F(\eta)-F(\eta-\delta_x) \leq \sum_{G\in\cF_i(P)\atop x\in G}\ell_i(G)\,\gamma(G,P)
$$
$\PP$-almost surely for $x\in\supp(\eta)$. Then by monotonicity of the intrinsic volumes under set inclusion on the space of convex sets we conclude that $\PP$-almost surely,
\begin{align*}
V^+ &= \int_{\RR^d} (F(\eta)-F(\eta-\delta_x))_+^2\,\eta(\dint x)\\
&\leq V_i(P)\int_{\RR^d} \sum_{G\in\cF_i(P)\atop x\in G}\ell_i(G)\,\gamma(G,P)\,\eta(\dint x)\\
&\leq V_i(K)\int_{\RR^d} \sum_{G\in\cF_i(P)\atop x\in G}\ell_i(G)\,\gamma(G,P)\,\eta(\dint x)\\
&= V_i(K)\int_{\RR^d} \sum_{G\in\cF_i(\conv(\eta))\atop x\in G}\ell_i(G)\,\gamma(G,\conv(\eta))\,\eta(\dint x),
\end{align*}
where in the first step we used the fact that external angles are non-negative. By construction of the models (In) and (Bd), each face $G\in\cF_i(\conv(\eta))$ is $\PP$-almost surely an $i$-dimensional simplex with precisely $i+1$ vertices. Thus,
\begin{equation}\label{eq:270421}
V^+ \leq (i+1)V_i(K)\sum_{G\in\cF_i(\conv(\eta))}\ell_i(G)\,\gamma(G,\conv(\eta)) = (i+1)V_i(K)V_i(\conv(\eta))\leq (i+1)V_i(K)^2,
\end{equation}
once again by the monotonicity of the intrinsic volumes under set inclusion on the space of convex subsets of $\RR^d$.

We shall now remove the assumption that $\supp(\eta)$ consists of at least $d+1$ points. First, if $\supp(\eta)$ has less than $i+1$ points the result is trivial since $F=V_i(\conv(\eta))$ is zero in this case. On the other hand, if $\supp(\eta)$ has more than $i+1$ points, a straight forward adaption of the above argument still works, basically the only change is that the polytopes $P$ and $P_x$ have dimension $i$ instead of $d$ in this case. Especially the bound \eqref{eq:270421} does not change. Finally, if $\supp(\eta)$ consists of precisely $i+1$ points, $\conv(\eta)$ is an $i$-dimensional simplex, in which case $V_i(\conv(\eta))-V_i(\conv(\eta-\delta_x))=V_i(\conv(\eta))$ $\PP$-almost surely for each of the $i+1$ vertices $x\in\eta$. By definition of $V^+$ this implies in the situation under consideration that $\PP$-almost surely $V^+=(i+1)V_i(\conv(\eta))^2\leq (i+1) V_i(K)^2$. So, \eqref{eq:270421} still holds in this set-up. Putting together all these cases, the bound for the upper tail now follows from Corollary \ref{cortb} by choosing $L=(i+1)V_i(K)^2$ there. 

The proof of the lower tail follows from the fact that $\PP$-almost surely
$$
V^-=\int_{\RR^d}(D_xF)^2\,\mu(\dint x)\leq \gamma V_i(K)^2
$$
and once again from Corollary \ref{cortb}, this time applied with $L=\gamma V_i(K)^2$.
\end{proof}

\subsection{Poisson cylinder models}

\begin{figure}
    \centering
    \includegraphics[width=0.45\columnwidth]{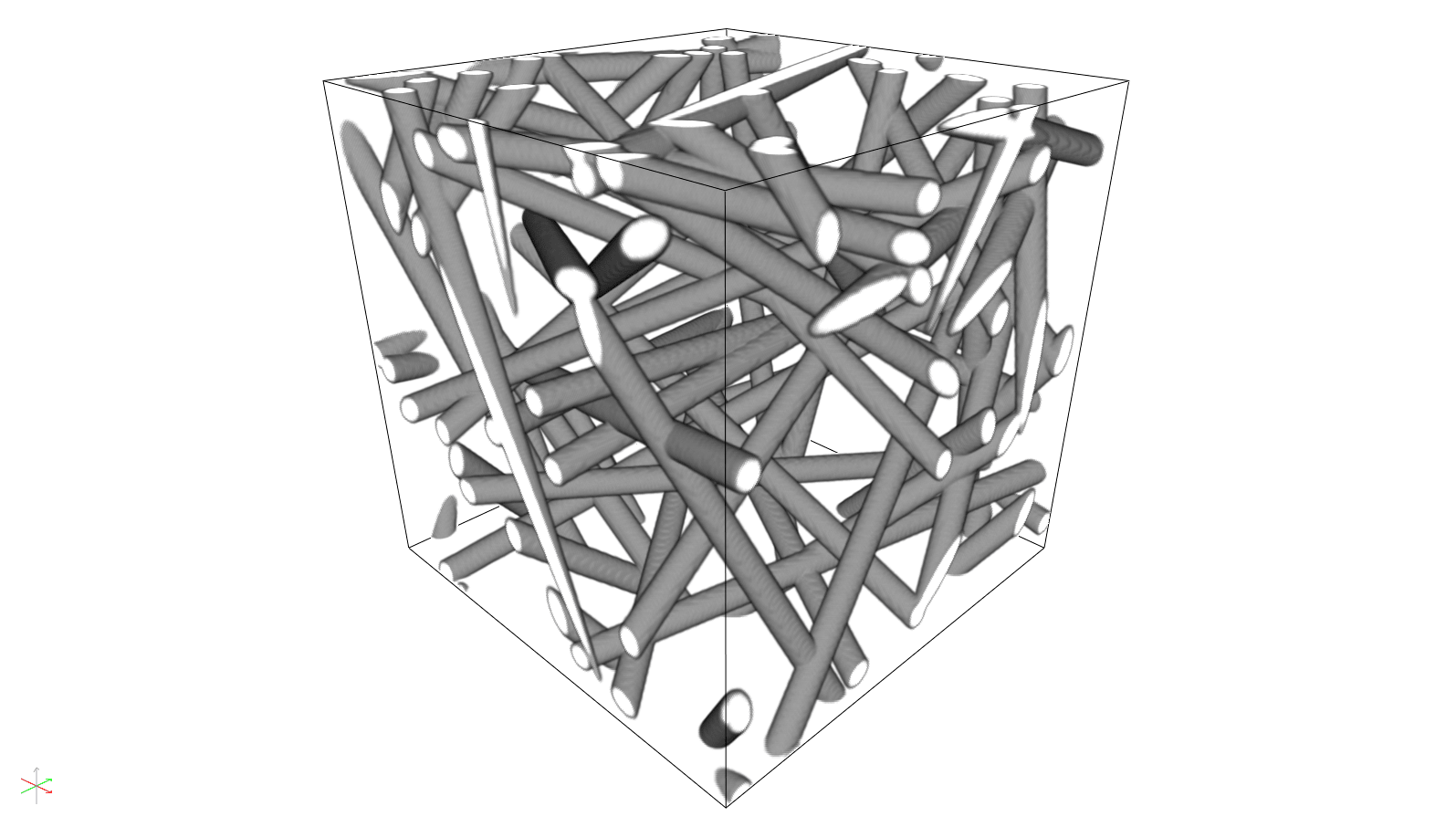}
    \includegraphics[width=0.45\columnwidth]{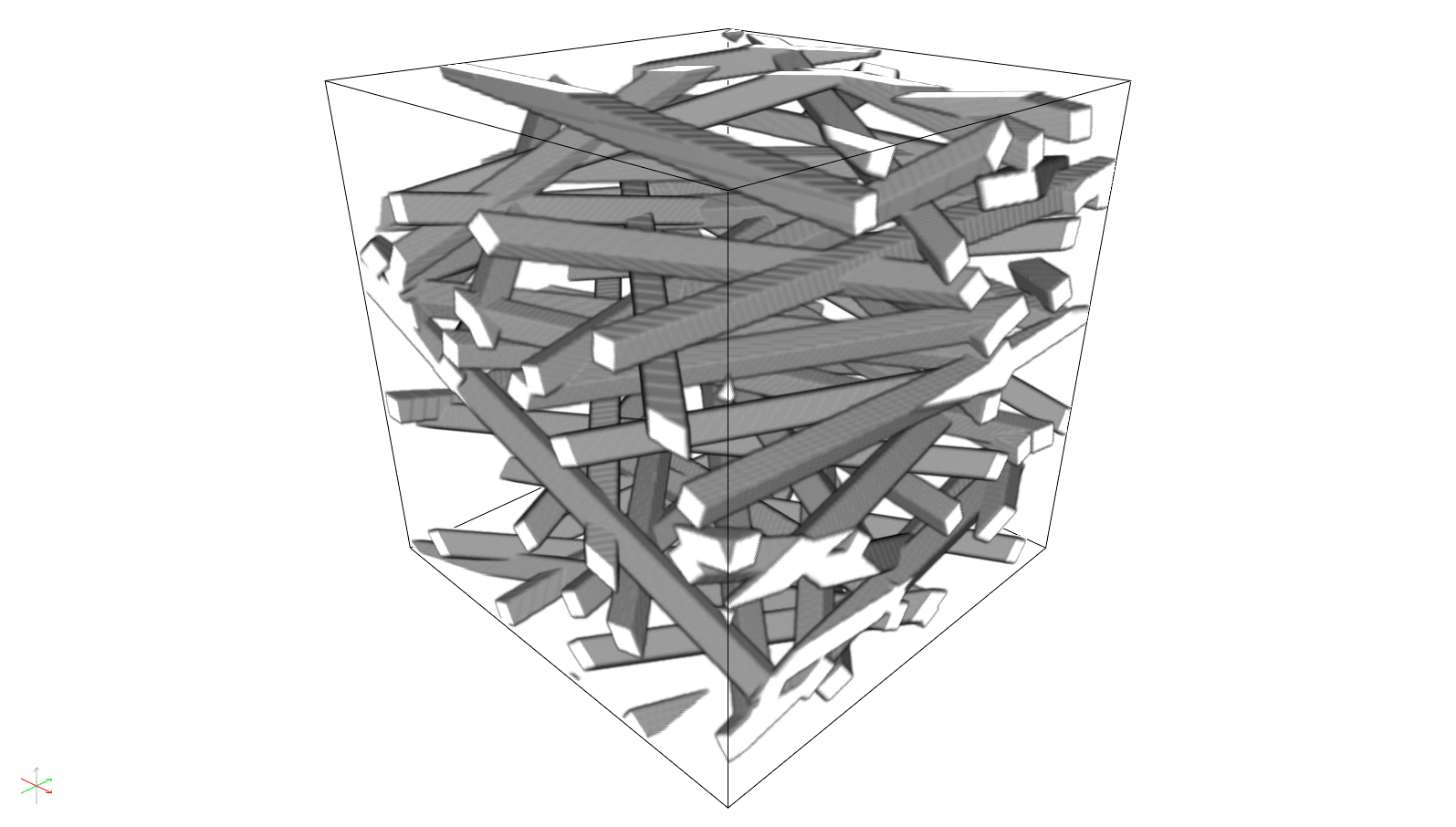}
    \caption{Simulations of two Poisson cylinder models with $d=3$, $k=1$ and with different base distributions (circles in the left panel and squares in the right panel). They were kindly provided by Claudia Redenbach (Kaiserslautern).}
    \label{fig:Cylinders}
\end{figure}

Our second application deals with the Poisson cylinder model for which a number of concentration properties for various geometric functionals have recently been studied in \cite{BaciBetkenGusakovaThaele}. We refer to that paper and the literature cited therein for further references and background material. To define the model, fix some space dimension $d\geq 1$ and another dimension parameter $k\in\{0,1,\ldots,d-1\}$. By $e_1,\ldots,e_d$ we denote the standard orthonormal basis in $\RR^d$ and let $G(d,k)$ be the Grassmannian of all $k$-dimensional linear subspaces of $\RR^d$. We identify each element $L\in G(d,k)$ with a representative of the equivalence class ${\bf M}_L$ of orthogonal matrices $M_L\in{\rm SO}(d)$ satisfying $L=M_LE_k$, where $E_k:={\rm span}(e_{d-k+1},\ldots,e_d)$; for concreteness we choose the lexicographically smallest element ${\rm lex\,min\,}{\bf M}_L$ from ${\bf M}_L$. Then we put $\SO_{d,k}:=\{{\rm lex\,min\,}{\bf M}_L:L\in G(d,k)\}$ and define the product space $\MM_{d,k}:=\SO_{d,k}\times\cC_{d-k}'$, where $\cC_{d-k}'$ denotes the space of non-empty compact subsets of $\RR
^{d-k}={\rm span}(e_1,\ldots,e_{d-k})\subset\RR^d$. The elements of that space describe the direction ($\SO_{d,k}$-component) and the basis ($\cC_{d-k}'$-component) of a cylinder.

Next, we let $\QQ$ be a probability measure on $\MM_{d,k}$ and consider a Poisson process $\eta$ in the product space $\RR^{d-k}\times\MM_{d,k}$ with intensity measure $\gamma\ell_{d-k}\otimes\QQ$, where $\gamma\in(0,\infty)$ is an intensity parameter and $\ell_{d-k}$ denotes the Lebesgue measure on $\RR^{d-k}$. The associated random union set
$$
Z := \bigcup_{(x,\theta,K)\in\eta}Z(x,\theta,K)\qquad\textup{with}\qquad Z(x,\theta,K):=\theta((x+K)\times E_k),
$$
is called a {\bf Poisson cylinder model}, see Figure \ref{fig:Cylinders} for two simulations. The measure $\QQ$ describes the joint distribution of the direction and the basis of a typical cylinder of the model. We emphasize that in the special case where $k=0$ the Poisson cylinder model reduces to the classical {\bf Boolean model}, which is included in our discussion as well, see \cite{SW} for background material on Boolean models. For a test set $W\in\cC_d'$ we are interested in the volume of $Z$ that can be observed in $W$, that is,
\begin{equation}\label{eq:CylinderDef}
F:=\ell_d(Z\cap W).    
\end{equation}
From \cite[Equation (3.1)]{BaciBetkenGusakovaThaele} we have that $\EE[F]=\ell_d(W)(1-e^{-\gamma\EE[\ell_{d-k}(\Xi)]})$. Here, $\Xi$ denotes the so-called typical cylinder base, i.e., a random element with distribution $\QQ_*$, where $\QQ_*$ is the marginal distribution of $\QQ$ on the $\cC_{d-k}'$-coordinate. Concentration properties for $F$ around its mean have recently been studied in \cite[Section 3]{BaciBetkenGusakovaThaele} (and \cite{GieringerLast} for $k=0$) based on concentration inequalities deduced from general covariance identities for Poisson functionals. The purpose of this section is to demonstrate that Corollary \ref{cortb} can also be used to derive concentration bounds for $F$, where in addition to \cite{BaciBetkenGusakovaThaele} we assume that the typical cylinder base has uniformly bounded volume. 

\begin{proposition}[Concentration for the volume]\label{prop:CylinderVol}
Consider a Poisson cylinder model as described above and suppose that the typical cylinder base $\Xi$ satisfies $\ell_{d-k}(\Xi)\leq A$ $\PP$-almost surely for some $A<\infty$. Then for $F$ as defined by \eqref{eq:CylinderDef} one has that
\begin{align*}
   \PP(F\geq\EE[F]+t) &\leq \exp\Big\{-{c\,t^2\over  A\,{\rm diam}(W)^k\,\ell_d(W)}\Big\},\\
   \PP(F\leq\EE[F]-t) &\leq \exp\Big\{-{c\,t^2\over \gamma\,A^2\,{\rm diam}(W)^k\,\ell_d(W)}\Big\}
\end{align*}
for every $t\geq 4\sqrt{\kappa}$, where $c = \log(2)/(8\kappa)$.
\end{proposition}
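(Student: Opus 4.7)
The plan is to apply Corollary \ref{cortb}, which reduces the problem to producing deterministic bounds on $V^+$ and $V^-$. The key simplification is monotonicity: adding a cylinder to $\eta$ can only enlarge $Z$, so $D_{(x,\theta,K)}F \geq 0$ and $F(\eta)-F(\eta-\delta_{(x,\theta,K)}) \geq 0$ pointwise. Consequently the $(D_xF)_-^2$ part of $V^+$ and the $(F(\eta)-F(\eta-\delta_x))_-^2$ part of $V^-$ vanish, leaving
\[
V^+ = \int_{\RR^{d-k}\times\MM_{d,k}}\bigl(F(\eta)-F(\eta-\delta_{(x,\theta,K)})\bigr)^2\,\eta(\dint(x,\theta,K))
\]
and $V^- = \gamma\int (D_{(x,\theta,K)}F)^2\,\ell_{d-k}(\dint x)\,\QQ(\dint(\theta,K))$.

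The central geometric estimate is the pointwise cylinder bound
\[
\ell_d(Z(x,\theta,K)\cap W) \leq \ell_{d-k}(K)\,{\rm diam}(W)^k \leq A\,{\rm diam}(W)^k,
\]
which follows from Fubini's theorem after rotating by $\theta^{-1}$: each $k$-dimensional slice of $W$ has diameter at most ${\rm diam}(W)$ and so fits into a $k$-cube of side ${\rm diam}(W)$. Both $F(\eta)-F(\eta-\delta_{(x,\theta,K)})$ and $D_{(x,\theta,K)}F$ are dominated by this quantity. For $V^+$, I would pull out one factor of $A\,{\rm diam}(W)^k$ and exploit a telescoping identity: for each $(x,\theta,K)\in\eta$, the difference $F(\eta)-F(\eta-\delta_{(x,\theta,K)})$ equals the Lebesgue measure of the portion of $W$ covered by $Z(x,\theta,K)$ but by no other cylinder of $\eta$, and hence
\[
\sum_{(x,\theta,K)\in\eta}\bigl(F(\eta)-F(\eta-\delta_{(x,\theta,K)})\bigr) \leq \ell_d(Z\cap W) = F \leq \ell_d(W),
\]
which yields $V^+ \leq A\,{\rm diam}(W)^k\,\ell_d(W)$.

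For $V^-$ I would again factor out the pointwise bound and write $(D_{(x,\theta,K)}F)^2 \leq A\,{\rm diam}(W)^k \cdot D_{(x,\theta,K)}F \leq A\,{\rm diam}(W)^k\cdot \ell_d(Z(x,\theta,K)\cap W)$. The $x$-integral can then be evaluated by translation invariance and Fubini:
\[
\int_{\RR^{d-k}}\ell_d(Z(x,\theta,K)\cap W)\,\ell_{d-k}(\dint x) = \ell_{d-k}(K)\,\ell_d(W) \leq A\,\ell_d(W),
\]
so that after integrating against $\QQ$ one obtains $V^- \leq \gamma A^2\,{\rm diam}(W)^k\,\ell_d(W)$. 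Feeding $L = A\,{\rm diam}(W)^k\,\ell_d(W)$ and $L = \gamma A^2\,{\rm diam}(W)^k\,\ell_d(W)$ into Corollary \ref{cortb} delivers the claimed upper and lower tail bounds. The only real subtlety is the telescoping counting argument for $V^+$ — namely that each point of $Z\cap W$ is counted at most once in the sum of marginal contributions — while the slice volume estimate and the $V^-$ computation are routine.
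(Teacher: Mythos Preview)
Your proposal is correct and follows essentially the same route as the paper: monotonicity kills the negative parts of $V^\pm$, the pointwise slice bound $\ell_d(Z(x,\theta,K)\cap W)\le A\,{\rm diam}(W)^k$ is used to peel off one factor, and the remaining sum for $V^+$ is controlled by the disjointness observation that $F(\eta)-F(\eta-\delta_{(x,\theta,K)})=\ell_d\bigl((Z(x,\theta,K)\cap W)\setminus Z_{-(x,\theta,K)}\bigr)$ (your ``telescoping'' identity), while for $V^-$ the $x$-integral is evaluated by Fubini to give $\ell_{d-k}(K)\,\ell_d(W)$. The paper's argument is identical in structure and in the bounds obtained.
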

\begin{proof}
In view of Corollary \ref{cortb} the bound for the upper tail follows once we prove that $V^+\leq A\,{\rm diam}(W)^k\,\ell_d(W)$. To establish this inequality we recall the definition of $V^+$:
\begin{align*}
    V^+ &= \gamma\int_{\MM_{d,k}}\int_{\RR^{d-k}} (D_{(x,\theta,K)}F)_-^2\,\ell_{d-k}(\dint x)\QQ(\dint(\theta,K))\\
    &\qquad+ \int_{\MM_{d,k}}\int_{\RR^{d-k}}(F(\eta)-F(\eta-\delta_{(x,\theta,K)}))_+^2\,\eta(\dint(x,\theta,K)).
\end{align*}
Now, since for $\ell_{d-k}\otimes\QQ$-almost all $(x,\theta,K)\in\RR^{d-k}\times\MM_{d,k}$,
\begin{align*}
    D_{(x,\theta,K)}F &= \ell_d((Z\cup Z(x,\theta,W))\cap W) - \ell_d(Z\cap W)\\
    &=\ell_d(Z(x,\theta,K)\cap W) - \ell_d(Z\cap Z(x,\theta,K)\cap W) \geq 0
\end{align*}
holds $\PP$-almost surely, we have that $(D_{(x,\theta,K)}F)_-^2=0$, implying that the first term in the representation of $V^+$ vanishes. On the other hand, for $(x,\theta,K)\in\eta$ we have that $\PP$-almost surely
\begin{align*}
    F(\eta)-F(\eta-\delta_{(x,\theta,K)}) &= \ell_d((Z_{-(x,\theta,K)}\cup Z(x,\theta,K))\cap W) - \ell_d(Z_{-(x,\theta,K)}\cap W)\\
    &=\ell_d(Z(x,\theta,K)\cap W) - \ell_d(Z_{-(x,\theta,K)}\cap Z(x,\theta,K)\cap W)  \\
    &= \ell_d((Z(x,\theta,K)\cap W)\setminus Z_{-(x,\theta,K)})\geq 0,
\end{align*}
where $Z_{-(x,\theta,K)}:=\bigcup_{(y,\phi,L)\in\eta-\delta_{(x,\theta,K)}}Z(y,\phi,L)$ stands for the cylinder model based on $\eta$ but with the cylinder corresponding to $(x,\theta,K)$ removed. Thus, using the assumption on the boundedness of the volume of the typical cylinder base, we find that $\PP$-almost surely
\begin{align*}
    &\int_{\MM_{d,k}}\int_{\RR^{d-k}}(F(\eta)-F(\eta-\delta_{(x,\theta,K)}))_+^2\,\eta(\dint(x,\theta,K))\\
    &\qquad\leq \int_{\MM_{d,k}}\int_{\RR^{d-k}}\ell_d((Z(x,\theta,K)\cap W)\setminus Z_{-(x,\theta,K)})^2\,\eta(\dint(x,\theta,K))\\
    &\qquad\leq A\,{\rm diam}(W)^k \int_{\MM_{d,k}}\int_{\RR^{d-k}}\ell_d((Z(x,\theta,K)\cap W)\setminus Z_{-(x,\theta,K)})\,\eta(\dint(x,\theta,K)) \\
    &\qquad \leq A\,{\rm diam}(W)^k\,\ell_d(W).
\end{align*}
Now, Corollary \ref{cortb} can be applied with $L=A\,{\rm diam}(W)^k\,\ell_d(W)$. This concludes the proof for the upper tail.

To obtain the bound for the lower tail we notice that, since  $F(\eta)-F(\eta-\delta_{(x,\theta,K)})\geq 0$ $\PP$-almost surely for $(x,\theta,K)\in\eta$,
\begin{align*}
    V^- &= \gamma\int_{\MM_{d,k}}\int_{\RR^{d-k}}\big(\ell_d(Z(x,\theta,K)\cap W)-\ell_d(Z\cap Z(x,\theta,K)\cap W)\big)^2\,\ell_{d-k}(\dint x)\QQ(\dint(\theta,K))\\
    &\leq \gamma\int_{\MM_{d,k}}\int_{\RR^{d-k}}\ell_d(Z(x,\theta,K)\cap W)^2\,\ell_{d-k}(\dint x)\QQ(\dint(\theta,K))\\
    &\leq \gamma\,A\,{\rm diam}(W)^k\int_{\MM_{d,k}}\int_{\RR^{d-k}}\ell_d(Z(x,\theta,K)\cap W)\,\ell_{d-k}(\dint x)\QQ(\dint(\theta,K))
\end{align*}
holds $\PP$-almost surely. For fixed $(\theta,K)\in\MM_{d,k}$ the inner integral can be evaluated by Fubini's theorem, which yields
\begin{align*}
    \int_{\RR^{d-k}}\ell_d(Z(x,\theta,K)\cap W)\,\ell_{d-k}(\dint x) = \ell_{d-k}(K)\,\ell_d(W),
\end{align*}
(this formula is also a special case of \cite[Theorem 2]{SW86}, which is stated there under the (in this situation unnecessary) assumption that both $W$ and $K$ are convex). Thus,
$$
V^- \leq \gamma\,A\,{\rm diam}(W)^k\,\ell_d(W)\,\EE[\ell_{d-k}(\Xi)] \leq \gamma\,A^2\,{\rm diam}(W)^k\,\ell_d(W),
$$
where we used our assumption on the volume of the typical cylinder base.
The bound for the lower tail now follows from Corollary \ref{cortb} with $L=\gamma\,A^2\,{\rm diam}(W)^k\,\ell_d(W)$. This completes the proof.
\end{proof}

To compare the result of Proposition \ref{prop:CylinderVol} with \cite[Corollary 4.5]{BaciBetkenGusakovaThaele} we focus on the upper tail and choose a $(d-k)$-dimensional ball $B_{\varrho}^{d-k}$ of radius $\varrho>0$ as our typical cylinder base and for the window $W$ a $d$-dimensional ball of radius $R>0$. In this case \cite[Corollary 4.5]{BaciBetkenGusakovaThaele} yields that
$$
\PP(F\geq \EE[F]+t) \leq \exp\Big\{{t\over a}-\Big(b+{t\over a}\Big)\log\Big(1+{t\over ab}\Big)\Big\}
$$
for all $t\geq 0$, where $a=A\,{\rm diam}(W)^k$ with $A=\ell_{d-k}(B_{\varrho}^{d-k})$ and $b>0$ is another constant depending on $\varrho,R$ and the intensity $\gamma$ whose value is not relevant for our purpose. At first sight, this bound seems worse than the one in Proposition \ref{prop:CylinderVol}. However, since $ab\geq\EE[F]$ as shown in the proof of \cite[Corollary 4.3]{BaciBetkenGusakovaThaele}, we have that
$$
\PP(F\geq \EE[F]+t) \leq \exp\Big\{{t\over a}-{\EE[F]\over a}\Big(1+{t\over\EE[F]}\Big)\log\Big(1+{t\over\EE[F]}\Big)\Big\},\qquad t\geq 0.
$$
Using now the elementary inequality $(1+x)\log(1+x)\geq x+{1\over 2}x^2/(1+x/3)$, valid for $x\geq 0$, we conclude that
\begin{equation}\label{eq:Cyl1Comp}
\PP(F\geq \EE[F]+t) \leq \exp\Big\{-{t^2\over 2a(\EE[F]+{t\over 3})}\Big\},\qquad t\geq 0.    
\end{equation}
To compare \eqref{eq:Cyl1Comp} with the bound from Proposition \ref{prop:CylinderVol} we put $p:=\EE[F]/\ell_d(W)$ and observe that in the relevant regime where $t\leq\ell_d(W)-\EE[F]$ (for larger $t$ the probability $\PP(F\geq\EE[F]+t)$ is zero by construction), one has that $2a(\EE[F]+{t\over 3})\leq 2a(p\ell_d(W)+{1\over 3}(1-p)\ell_d(W))={2\over 3}a\ell_d(W)(2p+1)$. Moreover, the inequality ${2\over 3}a\ell_d(W)(2p+1)\leq{a\over c}\ell_d(W)$ is always satisfied, since ${2\over 3}(2p+1)\leq 2< {1\over c}\approx 14.66$ and $0\leq p\leq 1$. As a consequence, 
$$
\exp\Big\{-{t^2\over 2a(\EE[F]+{t\over 3})}\Big\} \leq \exp\Big\{-{c\,t^2\over a\,\ell_d(W)}\Big\}
$$
for all relevant values of $t$. In other words, the concentration bound from \cite{BaciBetkenGusakovaThaele} is in this case always better than the one implied by Proposition \ref{prop:CylinderVol} by at least a constant factor.

\subsection*{Acknowledgement} 
We would like to thank two anonymous referees for their insightful comments and remarks, which helped us to improve our paper.\\
A.G.\ was partially supported by the Deutsche Forschungsgemeinschaft (DFG) via RTG 2131 \textit{High-Dimensional Phenomena in Probability -- Fluctuations and Discontinuity}. H.S.\ was supported by the Deutsche Forschungsgemeinschaft (DFG) via CRC 1283 \textit{Taming Uncertainty and Profiting from Randomness and Low Regularity in Analysis, Stochastics and their Applications}. C.T.\ was supported by the Deutsche Forschungsgemeinschaft (DFG) via SPP 2256 \textit{Random Geometric Systems}.

\addcontentsline{toc}{section}{References}


\begin{thebibliography}{00}

\bibitem{AdamczakEtAl}
R. Adamczak, B. Polaczyk and M. Strzelecki: Modified log-Sobolev inequalities, Beckner inequalities and moment estimates. arXiv: 2007.10209.

\bibitem{BP}
S. Bachmann and G. Peccati: Concentration bounds for geometric Poisson functionals: Logarithmic Sobolev inequalities revisited. Electron. J. Probab. {\bf 21}, article 6 (2016).

\bibitem{BR}
S. Bachmann and M. Reitzner: Concentration for Poisson U-statistics: Subgraph counts in random geometric graphs. Stochastic Processes Appl. {\bf 128}, 3327--3352 (2018).

\bibitem{BaciBetkenGusakovaThaele}
A. Baci, C. Betken, A. Gusakova and C. Th\"ale: Concentration inequalities for functionals of Poisson cylinder processes. Electron. J. Probab. \textbf{25}, article 128 (2020).

\bibitem{BBLM}
S. Boucheron, O. Bousquet, G. Lugosi and P. Massart: Moment inequalities for functions of independent random variables. Ann. Probab. {\bf 31}, 514--560 (2005).

\bibitem{Chafai2004}
D. Chafa\"{i}: Entropies, convexity, and functional inequalities. On $\Phi$-entropies and $\Phi$-Sobolev inequalities. J. Math. Kyoto Univ. {\bf 44}, 325--363 (2004).

\bibitem{GieringerLast}
F. Gieringer and G. Last: Concentration inequalities for measures of a Boolean model. ALEA Lat. Am. J. Probab. Math. Stat. {\bf 15}, 151--166 (2018)

\bibitem{GHP}
N. Gozlan, R. Herry and G. Peccati: Transport inequalities for random point measures. arXiv: 2002.04923.

\bibitem{GroteBall}
J. Grote and C. Th\"ale: Concentration and moderate deviations for Poisson polytopes and polyhedra. Bernoulli {\bf 24}, 2811--2841 (2018).

\bibitem{LastPeccatiSchulte}
G. Last, G. Peccati and M. Schulte: Normal approximation on Poisson spaces: Mehler's formula, second order Poincar\'{e} inequalities and stabilization.  Probab. Theory Related Fields {\bf 165}, 667--723 (2016).

\bibitem{LP}
G. Last and M.D. Penrose: {\em Lectures on the Poisson Process}. Cambridge University Press (2017).

\bibitem{PeccatiReitzner}
G. Peccati and M. Reitzner (editors): {\em Stochastic Analysis for Poisson Point Processes}. Bocconi \& Springer (2016).

\bibitem{ReitznerSurvey}
M. Reitzner: Random polytopes. In W. Kendall and I. Molchanov (editors) {\em New Perspectives in Stochastic Geometry}, Oxford University Press (2010).

\bibitem{ReitznerConcentration}
M. Reitzner: Poisson point processes: Large deviation inequalities for the convex distance. Electron. Commun. Probab. \textbf{18}, article 96 (2013).

\bibitem{ReitznerSchulteThaele}
M. Reitzner, M. Schulte and C. Th\"ale: Limit theory for the Gilbert graph. Adv. Appl. Math. \textbf{88}, 26--61 (2017).

\bibitem{RVW}
R.M. Richardson, V.H. Vu and L. Wu: An inscribing model for random polytopes. Discrete Comput. Geom. {\bf 39}, 469--499 (2008).

\bibitem{Schneider}
R. Schneider: {\em Convex Bodies: The Brunn--Minkowski Theory}. 2nd edition, Cambridge University Press (2013).

\bibitem{SW86}
R. Schneider and W. Weil: Translative and kinematic integral formulae for curvature measures. Math. Nachr. {\bf 129}, 67--80 (1986).

\bibitem{SW}
R. Schneider and W. Weil: {\em Stochastic and Integral Geometry}. Springer (2008).

\bibitem{Vu}
V.H. Vu: Sharp concentration of random polytopes. Geom. Funct. Anal. {\bf 15}, 1284 --1318 (2005).

\bibitem{Wu}
L. Wu: A new modified logarithmic Sobolev inequality for Poisson point processes and several applications. Probab. Theory Related Fields {\bf 118}, 427--438 (2000).

\end{thebibliography}
\end{document}